\setlist[enumerate]{parsep=0pt plus 4pt,topsep=0pt plus 4pt}
\newcommand{\excise}[1]{}%{$\star$\textsc{#1}$\star$}
\newtheorem{thm}{Theorem}[section]
\newtheorem{lemma}[thm]{Lemma}
\newtheorem{cor}[thm]{Corollary}
\newtheorem{prop}[thm]{Proposition}
\theoremstyle{definition}
\newtheorem{example}[thm]{Example}
\newtheorem{remark}[thm]{Remark}
\newtheorem{defn}[thm]{Definition}
\numberwithin{equation}{section}
\newcommand{\Ring}[1]{\ensuremath{\mathbb{#1}}}
\renewcommand\>{\rangle}
\newcommand\<{\langle}
\newcommand\0{\mathbf{0}}
\newcommand\FF{\Ring{F}}
\newcommand\KK{\Ring{K}}
\newcommand\NN{\Ring{N}}
\newcommand\OO{\mathcal{O}}
\newcommand\RR{\Ring{R}}
\newcommand\ee{{\bm\ve}}
\newcommand\kk{\Bbbk}
\newcommand\mm{{\mathfrak m}}
\newcommand\xx{{\mathbf x}}
\newcommand\yy{{\mathbf y}}
\newcommand\oG{\mathring G}
\newcommand\oK{\mathring\KK}
\newcommand\ve{\varepsilon}
\renewcommand\aa{{\mathbf a}}
\renewcommand\phi{\varphi}
\newcommand\from{\leftarrow}
\newcommand\orho{\mathring\rho}
\newcommand\into{\hookrightarrow}
\newcommand\otni{\hookleftarrow}
\newcommand\onto{\twoheadrightarrow}
\newcommand\spot{{\hbox{\raisebox{1pt}{\tiny$\scriptscriptstyle\bullet$}}}}
\newcommand\ffrom{\longleftarrow}
\newcommand\minus{\smallsetminus}
\newcommand\dirlim{\varinjlim}
\newcommand\invlim{\varprojlim}
\renewcommand\epsilon{\varepsilon}
\newcommand\mkl[1]{\makebox[0pt][l]{$#1$}}
\newcommand{\aoverb}[2]{{\genfrac{}{}{0pt}{1}{#1}{#2}}}
\def\twoline#1#2{\aoverb{\scriptstyle {#1}}{\scriptstyle {#2}}}
\DeclareMathOperator\Ext{Ext} % Ext
\DeclareMathOperator\Hom{Hom} % Hom
\DeclareMathOperator\Tor{Tor} % Tor
\DeclareMathOperator\Tot{Tot} % total complex of a double complex
\DeclareMathOperator\fldim{fl.\!dim} % flat dimension = Tor dim = weak global dim
\DeclareMathOperator\gldim{gl.\!dim} % global dimension
\newcommand\dis{\displaystyle}
\begin{document}%%%%%%%%%%%%%%%%%%%%%%%%%%%%%%%%%%%%%%%%%%%%%%%%%%%%%%%%
%%%%%%%%%%%%%%%%%%%%%%%%%%%%%%%%%%%%%%%%%%%%%%%%%%%%%%%%%%%%%%%%%%%%%%%%

\mbox{}
\vspace{-4.5ex}
\title{\!Global dimension of real-exponent polynomial rings\!}
\author{Nathan Geist}
\address{Mathematics Department\\Duke University\\Durham, NC 27708}
\email{nathan.geist@duke.edu}
\author{Ezra Miller}
\address{Mathematics Department\\Duke University\\Durham, NC 27708}
\urladdr{\url{https://math.duke.edu/people/ezra-miller}}

\makeatletter
  \@namedef{subjclassname@2020}{\textup{2020} Mathematics Subject Classification}
\makeatother
\subjclass[2020]{Primary: 13D05, 13F20, 13D02, 06F05, 13J99,    % subset of those
20M14, 06B35, 20M25, 22A25, 05E40;                              % from [essential-real];
Secondary: 13P25, 55N31, 62R40, 13E99, 13F99, 14A22}            % add: 14 toric, ...
% Primary:
% 13D05 Homological dimension
% 13F20 Polynomial rings and ideals; rings of integer-valued polynomials
% 13D02 Syzygies, resolutions, complexes
% 06F05 Ordered semigroups and monoids
% 13J99 None of the above, but in [Topological rings and modules]
% 20M14 Commutative semigroups
% 06B35 Continuous lattices and posets, applications
% 20M25 Semigroup rings, multiplicative semigroups of rings
% 22A25 Representations of general topological groups and semigroups
% 05E40 Combinatorial aspects of commutative algebra
% 
% Secondary:
% 13P25 Applications of commutative algebra (e.g., to statistics, control theory, etc.)
% 55N31 Persistent homology and applications, topological data analysis
% 62R40 Topological data analysis [See also 55N31]
% 13E99 None of the above but in this section (Chain conditions, finiteness conditions)
% 13F99 None of the above but in this section (Arithmetic rings & other special rings)
% 14A22 Noncommutative algebraic geometry [See also 16S38]
% 
% Nah (2020):
% 13P99 None of the above, but in [Computational aspects and apps of commutative rings]
% 
% \keywords{...}

\date{15 May 2022}

\begin{abstract}
The ring $R$ of real-exponent polynomials in $n$ variables over any
field has global dimension~$n + 1$ and flat dimension~$n$.  In
particular, the residue field $\kk = R/\mm$ of~$R$ modulo its maximal
graded ideal~$\mm$ has flat dimension~$n$ via a Koszul-like
resolution.  Projective and flat resolutions of all $R$-modules are
constructed from this resolution of~$\kk$.  The same results hold when
$R$ is replaced by the monoid algebra for the positive cone of any
subgroup of~$\RR^n$ satisfying a mild density condition.\vspace{-4ex}
\end{abstract}
\maketitle

% \tableofcontents

%%%%%%%%%%%%%%%%%%%%%%%%%%%%%%%%%%%%%%%%%%%%%%%%%%%%%%%%%%%%%%%%%%%%%%%%%
\section{Introduction}\label{s:intro}%%%%%%%%%%%%%%%%%%%%%%%%%%%%%%%%%%%%
%%%%%%%%%%%%%%%%%%%%%%%%%%%%%%%%%%%%%%%%%%%%%%%%%%%%%%%%%%%%%%%%%%%%%%%%%

\subsection*{Overview}
The aim of this note is to prove that the commutative ring $R$ of
real-exponent polynomials in $n$ variables over any field~$\kk$ has
global dimension~$n + 1$ and flat dimension~$n$
(Theorem~\ref{t:gl.dim} and Corollary~\ref{c:fldim}).  It might be
unexpected that $R$ has finite global dimension at all, but it should
be more expected that the flat dimension is achieved by the residue
field $\kk = R/\mm$ of~$R$ modulo its maximal graded ideal~$\mm$; a
Koszul-like construction shows that it is
(Proposition~\ref{p:fldim(k)} along with
Example~\ref{e:usual-koszul}).  In one real-exponent variable the
residue field~$\kk$ also achieves the global dimension bound of~$2$
(Lemma~\ref{l:Ext(k,F)}), and this calculation lifts to $n$ variables
by tensoring with an ordinary Koszul complex
(Proposition~\ref{p:n+1}), demonstrating global dimension at least~$n
+ 1$.  Projective and flat resolutions of all $R$-modules are
constructed from resolutions of the residue field in the proofs of
Theorems~\ref{t:gl.dim} and~\ref{t:flat-res} to yield the respective
upper bounds of $n + 1$ and~$n$.  The results extend to the monoid
algebra for the positive cone of any subgroup of~$\RR^n$ satisfying a
mild density condition (Definition~\ref{d:G}
and~Theorem~\ref{t:G-flat-res}).

\subsection*{Background}
Global dimension measures how long projective resolutions of modules
can get, or how high the homological degree of a nonvanishing Ext
module can be \cite[Theorem~4.1.2]{weibel1994}.  Finding rings of
finite global dimension is of particular value, since they are
considered to be smooth, generalizing the best-known case of local
noetherian commutative rings \cite{auslander-buchsbaum1957,serre1956},
which correspond to germs of functions on nonsingular algebraic
varieties.

The related notion of flat dimension (also called Tor dimension or
weak global dimension) measures how long flat resolutions of modules
can get, or how high the homological degree of a nonvanishing Tor
module can be.  Flat dimension is bounded by global dimension because
projective modules are flat.  These two dimensions agree for
noetherian commutative rings \cite[Proposition~4.1.5]{weibel1994}.
Without the noetherian condition equality can fail; commutative
examples include von Neumann regular rings that are infinite products
of fields (see~\mbox{\cite[p.\,98]{weibel1994}}), but domains are
harder~to~come~by.
% (Note: left and right flat dimensions are equal by symmetry of Tor;
% left and right global dimensions are equal in noetherian settings
% \cite[Exercise~4.1.1]{weibel1994}.)

The cardinality of a real-exponent polynomial ring a~priori indicates
a difference between flat and projective dimension that could be as
high as $1$ plus the index on~$\aleph$ in the cardinality of the real
numbers \cite[p.14]{osofsky1974}.  In certain situations, such as in
valuation rings, ideals generated by $\aleph_n$ and no fewer elements
are known to cause global dimension at least $n+2$ \cite{osofsky1967}
(cf.~\cite[Theorem, p.14]{osofsky1974}).  But despite $R$ having an
ideal minimally generated by all monomials with total degree~$1$, of
which there are~$2^{\aleph_0}$,
% In a valuation domain, an ideal generated by $\aleph_n$ and no fewer
% elements has projective dimension~$n+1$ \cite{osofsky1967}.  But $R$
% is a product of~$n$ valuation domains, and this~$n$ controls the
% global dimension.
the dimension of the positive cone of exponents is more pertinent than
its cardinality.  This remains the case when the exponent set is
intersected with a suitably dense subgroup of~$\RR^n$: the rank of the
subgroup is irrelevant (Section~\ref{s:dense}).

\subsection*{Methods}
The increase from global dimension~$n$ to $n + 1$ in the presence of
$n$~variables is powered by the violation of condition~5 from
\cite[Theorem~P]{bass1960}: a monomial ideal with an ``open orthant''
of exponents, such as the maximal ideal~$\mm_1$ in one indeterminate,
is a direct limit of principal monomial ideals
(Lemma~\ref{l:orthant-res}) but is not projective
(Lemma~\ref{l:Ext(k,F)}).  This phenomenon occurs already for Laurent
polynomials~$L_1$ in one integer-exponent variable.  But although
$\mm_1$ and~$L_1$ both have projective dimension~$1$, the
real-exponent maximal ideal~$\mm_1$ is a submodule of a projective
(actually, free) module; the inclusion has a cokernel, and its
projective dimension is greater by~$1$.

The most nontrivial point is how to produce a projective resolution of
length at most~$n + 1$ for any module over the real-exponent
polynomial ring~$R$ in $n$ variables.  Our approach takes two steps.
The first is a length~$n$ Koszul-like complex (Definition~\ref{d:y})
in $2n$ variables that resolves the residue field and can be massaged
into a flat resolution of any module (Theorem~\ref{t:flat-res}).  This
``total Koszul'' construction was applied to combinatorially resolve
monomial ideals in ordinary (that is, integer-exponent) polynomial
rings \cite[Section~6]{sylvan-resolution}.  The integer grading in the
noetherian case makes this construction produce a Koszul double
complex, which is key for the combinatorial purpose of minimalizing
the resulting free resolution by splitting an associated spectral
sequence.  It is not obvious whether the double complex survives to
the real-exponent setting, but the total complex does
(Definition~\ref{d:y}; cf.~\cite[Application~4.5.6]{weibel1994}), and
that suffices here because minimality is much more subtle---if it is
even possible---in the presence of real exponents
\cite{essential-real}.

\subsection*{Motivations}
Beyond basic algebra, there has been increased focus on non-noetherian
settings in, for example, noncommutative geometry and topological data
analysis.

Quantum noncommutative toric geometry
\cite{katzarkov-lupercio-meersseman-verjovsky2020} is based on dense
finitely generated additive subgroups of~$\RR^n$ instead of the
discrete sublattices that the noetherian commutative setting requires.
The situations treated by our main theorems, including especially
Section~\ref{s:dense}, correspond to ``smooth'' affine quantum toric
varieties and could have consequences for sheaf theory in that
setting.

The question of finite global dimension over real-exponent polynomial
rings has surfaced in topological data analysis (TDA), where modules
graded by $\RR^n$ are known as real multiparameter persistent
homology, cf.~\cite{lesnick-interleav2015}, \cite{essential-real}, and
\cite{bubenik-milicevic2020}, for example, or \cite{oudot2015} for a
perspective from quiver theory.  The question of global dimension
arises because defining metrics for statistical analysis requires
distances between persistence modules, many of which use derived
categorical constructions
\cite{kashiwara-schapira2018,strat-conical,berkouk-petit2021}; see
\cite[Section~7.1]{bubenik-milicevic2020} for an explicit mention of
the finite global dimension problem.

Real-exponent modules that are graded by $\RR^n$ and satisfy a
suitable finiteness condition (``tameness'') to replace the too-easily
violated noetherian or finitely presented conditions admit finite
multigraded resolutions by monomial ideals
\cite[Theorem~6.12]{hom-alg-poset-mods}, which are useful for~TDA.
But even in the tame setting no universal bound is known for the
finite lengths of such resolutions
\cite[Remark~13.15]{essential-real}.  The global dimension
calculations here suggest but do not immediately imply a universal
bound~of~$n +\nolinebreak 1$.

\subsection*{Notation}
The ordered additive group $\RR$ of real numbers has its monoid
$\RR_+$ of nonnegative elements.  The $n$-fold product $\RR^n =
\prod_{i=1}^n \RR$ has nonnegative cone $\RR^n_+ = \prod_{i=1}^n
\RR_+$.  The monoid algebra $R = R_n = \kk[\RR^n_+]$ over an arbitrary
field~$\kk$ is the ring of real-exponent polynomials in $n$ variables:
finite sums $\sum_{\aa \in \RR^n_+} c_\aa \xx^\aa$, where $\xx^\aa =
x_1^{a_1} \cdots x_n^{a_n}$.  Its unique monoid-graded maximal
ideal~$\mm$ is spanned over~$\kk$ by all nonunit monomials.

Unadorned tensor products are over~$\kk$.  For example, $R \cong R_1
\otimes \cdots \otimes R_1$ is an $n$-fold tensor product over~$\kk$,
where $R_1 = \kk[\RR_+]$ is the real-exponent polynomial ring in one
variable with graded maximal ideal~$\mm_1$.

%%%%%%%%%%%%%%%%%%%%%%%%%%%%%%%%%%%%%%%%%%%%%%%%%%%%%%%%%%%%%%%%%%%%%%%%%
\section{Flat dimension~\texorpdfstring{$n$}{n}}\label{s:fldim}%%%%%%%%%%
%%%%%%%%%%%%%%%%%%%%%%%%%%%%%%%%%%%%%%%%%%%%%%%%%%%%%%%%%%%%%%%%%%%%%%%%%

\begin{lemma}\label{l:k-res-R1}
The filtered colimit $\dirlim_{\ve > 0} (R_1 \otni \<x^\ve\>)$ of the
inclusions of the principal ideals generated by $x^\ve$ for
positive~$\ve \in \RR$ is a flat resolution $\oK^1_\spot: R_1 \otni
\mm_1$ of\/~$\kk$ over~$R_1$.
\end{lemma}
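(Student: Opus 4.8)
The plan is to realize the object in the statement as a filtered colimit of length-one free resolutions and then push exactness and flatness through the colimit. Throughout, each inclusion $\<x^\ve\>\into R_1$ is regarded as a two-term complex concentrated in homological degrees $1$ and $0$ with differential the inclusion, so that the colimit in question is taken in the category of complexes of $R_1$-modules.

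First I would set up the filtered diagram. Since $R_1$ is a domain, multiplication by $x^\ve$ is an injection $R_1\simto\<x^\ve\>$, so each $\<x^\ve\>$ is free of rank one and $R_1\otni\<x^\ve\>$ is a free resolution of $R_1/\<x^\ve\>$. Order the index set $(0,\infty)$ by declaring $\ve_1\preceq\ve_2$ to mean $\ve_1\ge\ve_2$; this poset is directed, as $\min(\ve_1,\ve_2)$ dominates any pair. For $\ve_1\ge\ve_2$ one has $x^{\ve_1}=x^{\ve_2}\,x^{\ve_1-\ve_2}\in\<x^{\ve_2}\>$, hence $\<x^{\ve_1}\>\subseteq\<x^{\ve_2}\>$; these inclusions, together with the identity on the constant term $R_1$, assemble $\ve\mapsto(R_1\otni\<x^\ve\>)$ into a filtered system of two-term complexes.

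Next I would compute the colimit, which is formed degreewise. Its degree-zero term is the constant colimit $R_1$; its degree-one term is $\dirlim_{\ve>0}\<x^\ve\>=\bigcup_{\ve>0}\<x^\ve\>$, which is precisely $\mm_1$, because every nonunit monomial $x^a$ with $a>0$ lies in $\<x^\ve\>$ for each $0<\ve\le a$, while conversely $\<x^\ve\>\subseteq\mm_1$ for all $\ve>0$; and its differential is the colimit of the inclusions, namely $\mm_1\into R_1$. Thus the colimit is the two-term complex $\oK^1_\spot\colon R_1\otni\mm_1$ asserted in the statement.

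It remains to check that $\oK^1_\spot$ is a flat resolution of $\kk$. Exactness is immediate: the differential $\mm_1\into R_1$ is injective, so $H_1=0$, and $H_0=R_1/\mm_1=\kk$ by definition of the graded maximal ideal. For the terms, $R_1$ is free, while $\mm_1=\dirlim_{\ve>0}\<x^\ve\>$ is a filtered colimit of the free modules $\<x^\ve\>$ and hence flat, since a filtered colimit of flat modules is flat \cite{weibel1994}; in particular the resolution has length one. I do not anticipate a genuine obstacle here: the only steps requiring a little care are the bookkeeping that orients the index poset so that it is directed with the transition maps of complexes pointing consistently, together with this single standard input about filtered colimits.
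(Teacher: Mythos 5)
Your proposal is correct and follows essentially the same route as the paper: identify the colimit as the two-term complex $R_1 \otni \mm_1$, get flatness of $\mm_1$ from Lazard's criterion that filtered colimits of free modules are flat, and get acyclicity either by direct inspection (as you do) or by commuting colimits with homology (as the paper does). The extra bookkeeping you supply---directing the index poset and computing the colimit degreewise---is exactly what the paper leaves implicit.
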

\begin{proof}
Colimits commute with homology so the colimit is a resolution.
Filtered colimits of free modules are flat by Lazard's criterion
\cite{lazard1964}, so the resolution is flat.
\end{proof}

\begin{defn}\label{d:open-koszul}
The \emph{open Koszul complex} is the tensor product $\oK^\xx_\spot =
\bigotimes_{i=1}^n \oK^1_\spot$ over the field~$\kk$ of $n$ copies of
the flat resolution in Lemma~\ref{l:k-res-R1}.  The $2^n$ summands of
$\oK^\xx_\spot$, each a tensor product of~$j$ copies of~$R_1$ and
$n-j$ copies of~$\mm_1$, are \emph{orthant ideals}.
\end{defn}

\begin{example}\label{e:open-koszul}
The open Koszul complex in two real-exponent variables is depicted in
Figure~\ref{f:open-koszul}.  {}From a geometric perspective, take the
ordinary Koszul complex from Figure~\ref{f:ordinary-koszul}, replace
the free modules with their continous versions, and push the
generators as close to the origin as possible without meeting it.  The
four possible orthant ideals are rendered in
Figure~\ref{f:open-koszul}.  {}From left to right,
% (ignoring the augmentation)
viewing them as tensor products, they correspond to the product of two
closed rays~$\kk[\RR_+]$, the product (in both orders) of a closed ray
with an open ray~$\mm$, and the product of two open rays.  In $n$~
real-exponent variables the $2^n$ orthant ideals arise from all
$n$-fold tensor products~of~closed~and~open~rays.
\begin{figure}[ht]
\centering
{%
\psfrag{0}{$0$}
\psfrag{from}{$\from$}
\psfrag{oplus}{$\oplus$}
\includegraphics[width=5.4in]{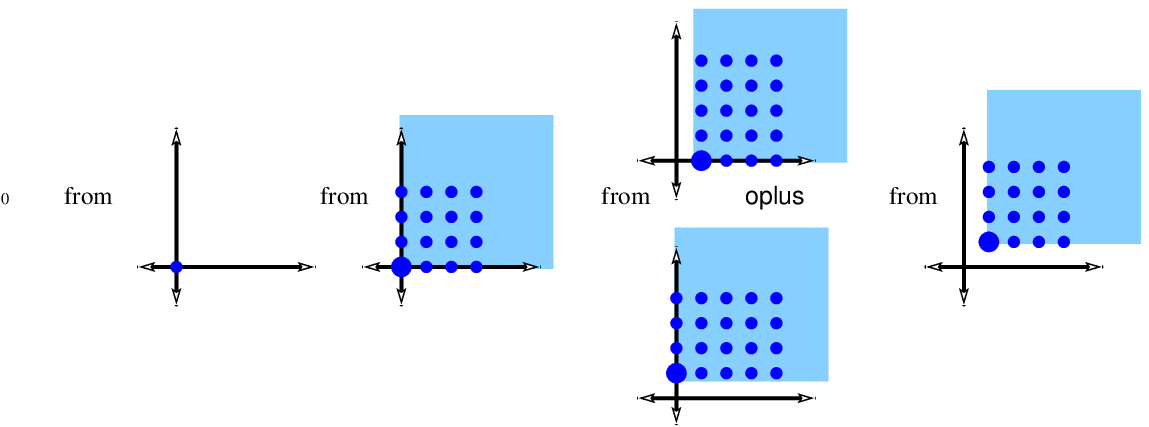}
}%
\vspace{-2ex}
\caption{Ordinary Koszul complex in two variables}
\label{f:ordinary-koszul}
\end{figure}

\begin{figure}[ht]
\centering
{%
\psfrag{0}{$0$}
\psfrag{from}{$\from$}
\psfrag{oplus}{$\oplus$}
\includegraphics[width=5in]{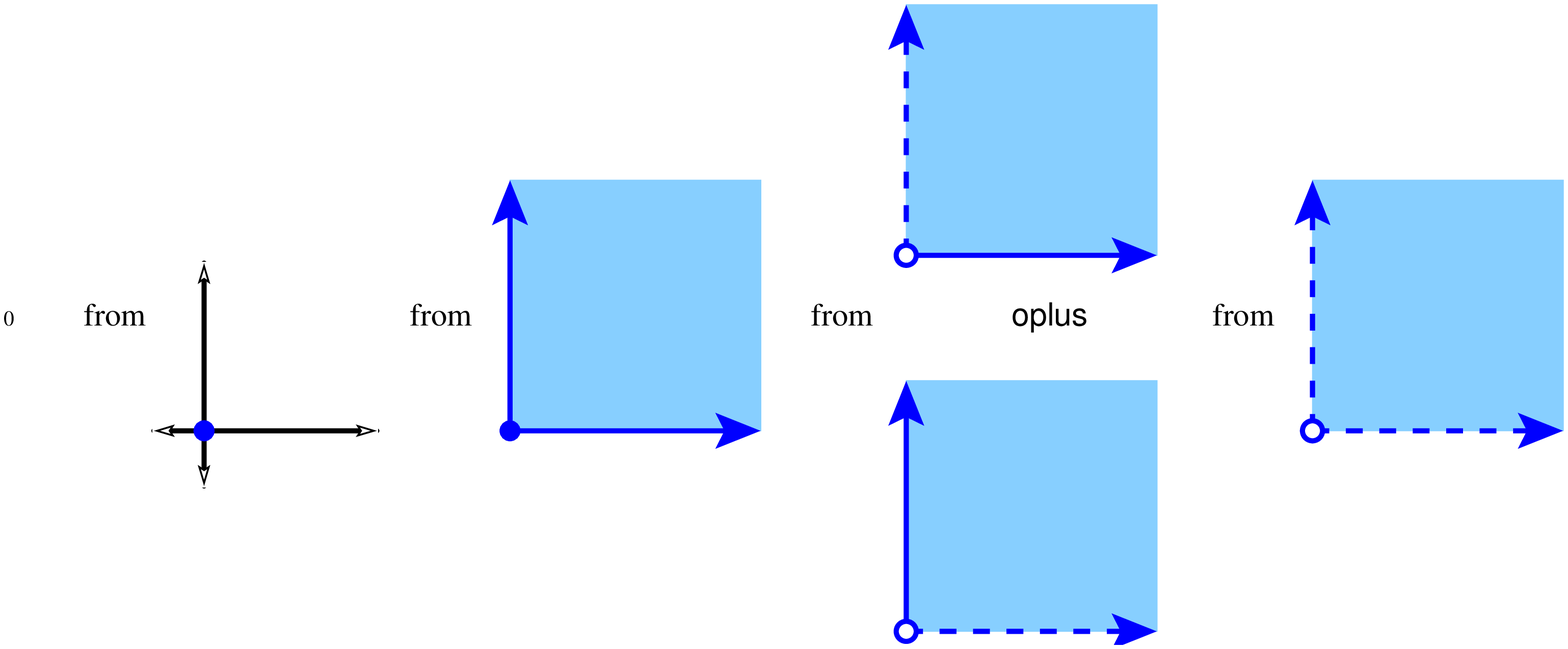}
}%
\vspace{-2ex}
\caption{Open Koszul complex in two real-exponent variables}
\label{f:open-koszul}
\end{figure}
\end{example}

\begin{prop}\label{p:fldim(k)}
The open Koszul complex~$\oK^\xx_\spot$ is a flat resolution
of\/~$\kk$ over~$R$.
\end{prop}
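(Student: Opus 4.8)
The plan is to exploit the fact that $R \cong R_1 \otimes \cdots \otimes R_1$ is an $n$-fold tensor product over the field~$\kk$ and that $\oK^\xx_\spot$ is, by Definition~\ref{d:open-koszul}, the corresponding tensor product $\bigotimes_{i=1}^n \oK^1_\spot$ of the one-variable flat resolutions from Lemma~\ref{l:k-res-R1}. There are two things to verify: that every term of $\oK^\xx_\spot$ is flat over~$R$, and that the homology of the complex is~$\kk = R/\mm$ concentrated in homological degree~$0$.

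For flatness, recall from Lemma~\ref{l:k-res-R1} that each term of $\oK^1_\spot$---namely $R_1$ itself and $\mm_1 = \dirlim_{\ve>0}\<x^\ve\>$---is a filtered colimit of (finitely generated) free $R_1$-modules. An orthant ideal is a tensor product $M_1 \otimes \cdots \otimes M_n$ over~$\kk$ with each $M_i \in \{R_1,\mm_1\}$. Since tensor product over~$\kk$ commutes with filtered colimits and carries free modules to free modules---$R_1^{(I_1)} \otimes \cdots \otimes R_1^{(I_n)} \cong R^{(I_1 \times \cdots \times I_n)}$---each orthant ideal is a filtered colimit of free $R$-modules, hence flat over~$R$ by Lazard's criterion \cite{lazard1964}, exactly as in the proof of Lemma~\ref{l:k-res-R1}.

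For the homology, note that all modules involved are $\kk$-vector spaces, so the tensor-product differential on $\oK^1_\spot \otimes \cdots \otimes \oK^1_\spot$ is $R$-linear and the Künneth theorem over the field~$\kk$ applies with no $\Tor$ correction term. Iterating it $n$ times gives $H_\spot(\oK^\xx_\spot) \cong \bigotimes_{i=1}^n H_\spot(\oK^1_\spot)$, which by Lemma~\ref{l:k-res-R1} is $\kk \otimes \cdots \otimes \kk = \kk$ in homological degree~$0$ and vanishes in positive degrees. It remains to identify this degree-$0$ homology with the $R$-module $\kk = R/\mm$: the $R$-action on $H_0(\oK^\xx_\spot)$ factors through the $n$-fold tensor product of the augmentations $R_1 \onto R_1/\mm_1 = \kk$, whose kernel is the ideal generated by all nonunit monomials, namely~$\mm$. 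Together with the flatness of the terms established above, this shows $\oK^\xx_\spot$ is a flat resolution of~$\kk$ over~$R$.

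No step presents a serious obstacle; the only points that merit a moment's care are the stability of flatness under $\otimes_\kk$ (handled by Lazard's criterion, as above) and the verification that Künneth delivers the residue field $R/\mm$ rather than merely an abstract copy of~$\kk$.
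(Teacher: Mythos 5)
Your proposal is correct and follows exactly the paper's route: the paper's proof is the one-line citation ``Lemma~\ref{l:k-res-R1} and the K\"unneth theorem,'' and you have simply spelled out the two ingredients that citation compresses---the K\"unneth computation of the homology over the field~$\kk$ and the flatness of each orthant ideal as a filtered colimit of free $R$-modules via Lazard's criterion. No discrepancy to report.
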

\begin{proof}
Lemma~\ref{l:k-res-R1} and the K\"unneth theorem
% \cite[Theorem~V.2.1]{hilton-stammbach1997}
% or \cite[somewhere]{macLane1998}
\cite[Theorem~3.6.3]{weibel1994}.
\end{proof}

Limit-Koszul complexes similar to~$\oK^\xx_\spot$ have previously been
used to compute flat dimensions of absolute integral closures
\cite{aberbach-hochster1997} in the context of tight closure.

\begin{example}\label{e:usual-koszul}
The sequence $\xx^{[\ve]} = x_1^\ve,\dots,x_n^\ve$ is regular in~$R$
\cite[Chapter~1]{bruns-herzog1998}, so the usual Koszul complex
$\KK_\spot(\xx^{[\ve]})$ is a length~$n$ free resolution of $B_n^\ve =
R/\<\xx^{[\ve]}\>$ over~$R$.  Using this resolution, $\Tor_n^R(\kk,
B_n^\ve) = \kk$ because $\kk \otimes_R \KK_\spot(\xx^{[\ve]})$ has
vanishing~differentials.
\end{example}

\begin{lemma}\label{l:Rotimes2}
The real-exponent polynomial ring $R^{\otimes 2} = R \otimes R$ has
$2n$~variables
\begin{align*}
& \xx = x_1,\dots,x_n = x_1 \otimes 1, \dots, x_n \otimes 1
\\
\text{and}\quad
& \yy = \hspace{.15ex}y_1,\dots,\hspace{.15ex}y_n
      = 1 \otimes \hspace{.2ex}x_1, \dots, 1 \otimes \hspace{.2ex}x_n.
\end{align*}
Over $R^{\otimes 2}$ is a directed system of Koszul complexes
$\KK_\spot(\xx^{[\ve]}-\yy^{[\ve]})$ on the sequences
$$%
  \xx^{[\ve]} - \yy^{[\ve]} = x_1^\ve - y_1^\ve, \dots, x_n^\ve - y_n^\ve
$$
with $\ve > 0$.  The colimit
$\dis%
  \oK^{\xx-\yy}_\spot = \dirlim_{\ve > 0} \KK_\spot(\xx^{[\ve]}-\yy^{[\ve]})
$
is an $R^{\otimes 2}$-flat resolution of~$R$.
\end{lemma}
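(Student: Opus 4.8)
The plan follows the template of Lemma~\ref{l:k-res-R1} and Proposition~\ref{p:fldim(k)}, in three steps. First I would check that for each $\ve > 0$ the sequence $\xx^{[\ve]} - \yy^{[\ve]}$ is regular in $R^{\otimes 2}$, so that $\KK_\spot(\xx^{[\ve]} - \yy^{[\ve]})$ is a length-$n$ free resolution of $C^\ve := R^{\otimes 2}/\<\xx^{[\ve]} - \yy^{[\ve]}\>$ \cite[Chapter~1]{bruns-herzog1998}, exactly as $\KK_\spot(\xx^{[\ve]})$ resolves $B_n^\ve$ in Example~\ref{e:usual-koszul}. Regularity reduces to the case $\ve = 1$ via the $\kk$-algebra automorphism of $R^{\otimes 2} = \kk[\RR^{2n}_+]$ that rescales every exponent by~$\ve$, which sends each $x_i - y_i$ to $x_i^\ve - y_i^\ve$; and for $\ve = 1$ one inducts on~$i$. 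Since $R^{\otimes 2}$ is a domain (the monoid algebra of the cancellative, torsion-free monoid $\RR^{2n}_+$), $x_1 - y_1$ is a nonzerodivisor; and after killing $x_1 - y_1, \dots, x_i - y_i$ the quotient has the form $\bar A_i \otimes_\kk \kk[\RR^{2(n-i)}_+]$, where $\bar A_i$ involves only the first $i$ variable pairs, so multiplication by $x_{i+1} - y_{i+1} \in \kk[\RR^{2(n-i)}_+]$ is $\id_{\bar A_i}$ tensored over the field~$\kk$ with multiplication by a nonzerodivisor of the domain $\kk[\RR^{2(n-i)}_+]$, hence injective.

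For the second step I would reduce to a single variable pair: $\KK_\spot(\xx^{[\ve]} - \yy^{[\ve]})$ is the $n$-fold tensor product over~$\kk$ of the complexes $\KK_\spot(x_i^\ve - y_i^\ve)$ over $R_1^{\otimes 2}$, so once one knows that $\dirlim_{\ve > 0}\bigl(R_1^{\otimes 2} \otni \<x^\ve - y^\ve\>\bigr)$ is an $R_1^{\otimes 2}$-flat resolution of~$R_1$, the K\"unneth theorem \cite[Theorem~3.6.3]{weibel1994}, together with commutation of the diagonal filtered colimit with a finite tensor product, yields the $n$-variable statement---the same passage by which Proposition~\ref{p:fldim(k)} follows from Lemma~\ref{l:k-res-R1}. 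In the single-pair case, filtered colimits are exact, hence commute with homology, so the colimit complex has no higher homology and $H_0 = \dirlim_\ve C^\ve$; it then remains to identify $\dirlim_\ve C^\ve$ with $R_1 = R_1^{\otimes 2}/J$, where $J = \ker(R_1^{\otimes 2} \onto R_1)$ is the ideal generated by all the binomials $x^\ve - y^\ve$ (for $a < a'$ one has $x^a y^b - x^{a'} y^{b'} = -x^a y^{b'}(x^{a' - a} - y^{a' - a})$, and such binomials span~$J$). Finally, flatness of the resulting resolution is Lazard's criterion \cite{lazard1964}, as in Lemma~\ref{l:k-res-R1}: each homological degree of $\oK^{\xx-\yy}_\spot$ is a filtered colimit of finite free $R^{\otimes 2}$-modules.

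The point I expect to be the main obstacle is precisely the identification $\dirlim_\ve C^\ve = R$. Unlike the principal ideals $\<x^\ve\>$ of Lemma~\ref{l:k-res-R1}, the ideals $\<\xx^{[\ve]} - \yy^{[\ve]}\>$ are \emph{not} nested along the order on~$\ve$---for instance $x - y \notin \<x^{2/3} - y^{2/3}\>$ in $R_1^{\otimes 2}$, since $3/2 \notin \ZZ$---so their union is not an ideal, and some care is needed to choose the transition maps and the index set so that the system is genuinely directed while the colimit nonetheless captures every binomial $x_i^\ve - y_i^\ve$, including those whose exponent is irrationally related to a chosen cofinal family. This density phenomenon, absent in the integer-exponent world, is the technical heart of the lemma; everything else is bookkeeping.
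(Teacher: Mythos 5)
Your outline coincides with the paper's own (very terse) proof: reduce to $n=1$ by a K\"unneth/tensor decomposition over~$\kk$, note that each $\KK_\spot(\xx^{[\ve]}-\yy^{[\ve]})$ resolves $R^{\otimes 2}/\<\xx^{[\ve]}-\yy^{[\ve]}\>$ because the sequence is regular, pass to the colimit, and identify the augmentation via $R^{\otimes 2}/\<x^\ve - y^\ve \mid \ve>0\> \cong R$. Your regularity induction, your identification of $\ker(R_1^{\otimes 2}\onto R_1)$ as the span of differences of monomials with equal image, and the appeal to Lazard are all correct and are exactly what the paper's citation-level proof presupposes.

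The problem is that the issue you raise in your last paragraph is genuine and you do not close it; ``some care is needed'' is not a proof of the one assertion on which every other step rests. Concretely: a chain map $\KK_\spot(x^\ve-y^\ve)\to\KK_\spot(x^{\ve'}-y^{\ve'})$ that is the identity in degree~$0$ is forced in degree~$1$ to be multiplication by $(x^\ve-y^\ve)/(x^{\ve'}-y^{\ve'})$, and a total-degree computation (the one behind your $x-y\notin\<x^{2/3}-y^{2/3}\>$ example) shows this quotient lies in $R_1^{\otimes 2}$ precisely when $\ve/\ve'\in\ZZ_{>0}$. So the index category on all $\ve>0$ with these maps is not filtered---incommensurable exponents admit no common refinement---and no cofinal chain $\ve_k\downarrow 0$ repairs this: for any countable chain there is an $\ve$ with $\ve/\ve_k\notin\ZZ$ for every~$k$, so $x^\ve-y^\ve$ lies in none of the $\<x^{\ve_k}-y^{\ve_k}\>$, the union $\bigcup_k\<x^{\ve_k}-y^{\ve_k}\>$ is a proper subideal of $\ker(R_1^{\otimes 2}\onto R_1)$, and the colimit over that chain has $H_0\neq R_1$. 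Consequently the exactness of the colimit, the identification of $H_0$ with~$R$, and even the hypothesis of Lazard's criterion (a \emph{filtered} colimit of free modules) all hinge on exhibiting a genuinely filtered system, with explicit transition maps, whose colimit still sees every binomial. That requires changing the index category---for instance indexing by finite sets of exponents or by finitely generated submonoids of $\RR_+$ and attaching an appropriate complex to each index, then checking acyclicity there---rather than merely reordering the complexes as written. You have correctly located where the real content of the lemma lies (the paper's one-sentence proof is silent on the same point), but as it stands your proposal asserts the crucial step rather than proving it.
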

\begin{proof}
The general case is the tensor product over~$\kk$ of $n$ copies of the
case $n = 1$, which in turn reduces to the calculation $R^{\otimes
2}/\<x^\ve - y^\ve \mid \ve > 0\> \cong R$.
\end{proof}

\begin{defn}\label{d:y}
Denote by $R^\xx$ and $R^\yy$ the copies of~$R$ embedded in
$R^{\otimes 2}$ as $R \otimes 1$ and $1 \otimes R$.  Fix an
$R^\xx$-module~$M$.
\begin{enumerate}
\item%
Write $M^\yy$ for the corresponding $R^\yy$-module, with the $\xx$
variables renamed to~$\yy$.
\item%
The \emph{open total Koszul complex} of an $R^\xx$-module~$M$ is
$\dis%
  \oK^{\xx-\yy}_\spot(M)
  =
  \oK^{\xx-\yy}_\spot \otimes_{R^\yy} M^\yy.
$
\end{enumerate}
\end{defn}

\begin{remark}\label{r:orthant}
By Definition~\ref{d:open-koszul}, each of the $4^n$ summands
of~$\oK^{\xx-\yy}_\spot$ in Lemma~\ref{l:Rotimes2} is the tensor
product over~$\kk$ of an orthant $R^\xx$-ideal and an orthant
$R^\yy$-ideal.
\end{remark}

\begin{thm}\label{t:flat-res}
The open total Koszul complex $\oK^{\xx-\yy}_\spot(M)$ is a length~$n$
resolution of~$M$ over $R^{\otimes 2}$ for any $R^\xx$-module~$M$.
This resolution is flat over~$R^\xx$; more precisely, as an
$R^\xx$-module $\oK^{\xx-\yy}_\spot(M)$ is a direct sum of orthant
$R^\xx$-ideals.
\end{thm}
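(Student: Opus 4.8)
The plan is to read $\oK^{\xx-\yy}_\spot$ in two different ways: as a complex of $R^\xx$-modules, to extract the orthant decomposition and flatness over~$R^\xx$; and as a complex of $R^\yy$-modules, to extract exactness over $R^{\otimes 2}$ together with the identification of $H_0$ with~$M$. The two readings stay separate because, by Remark~\ref{r:orthant}, each term of $\oK^{\xx-\yy}_\spot$ splits as a $\kk$-linear tensor product $I^\xx \otimes_\kk J^\yy$ with $I^\xx$ an orthant $R^\xx$-ideal and $J^\yy$ an orthant $R^\yy$-ideal, so the $\xx$-action and the $\yy$-action live on different factors.

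For the $R^\xx$-structure, I would first observe that tensoring over $R^\yy$ with $M^\yy$ slides past the $\kk$-factor that carries no $\yy$-action,
\[
  (I^\xx \otimes_\kk J^\yy) \otimes_{R^\yy} M^\yy
  \;\cong\;
  I^\xx \otimes_\kk (J^\yy \otimes_{R^\yy} M^\yy),
\]
the usual associativity isomorphism for $\kk \to R^\yy$. As an $R^\xx$-module the right-hand side is $I^\xx \otimes_\kk V$ for the bare $\kk$-vector space $V = J^\yy \otimes_{R^\yy} M^\yy$, hence a direct sum of copies of~$I^\xx$ indexed by a $\kk$-basis of~$V$. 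Summing over the finitely many summands in each homological degree exhibits $\oK^{\xx-\yy}_\spot(M)$, as an $R^\xx$-module, as a direct sum of orthant $R^\xx$-ideals. Each orthant ideal is flat over $R^\xx = R$, being a direct summand of the appropriate term of the open Koszul complex $\oK^\xx_\spot$, whose terms are flat by Proposition~\ref{p:fldim(k)} (equivalently, apply Lazard's criterion to Lemma~\ref{l:k-res-R1}); a direct sum of flats is flat, which settles the flatness claim.

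For exactness I would switch to the $R^\yy$-module structure. By Lemma~\ref{l:Rotimes2}, $\oK^{\xx-\yy}_\spot$ is an $R^{\otimes 2}$-flat resolution of the diagonal copy $R^{\otimes 2}/\<x_i^\ve - y_i^\ve \mid \ve > 0\> \cong R$; restriction of scalars along $R^\yy \into R^{\otimes 2}$ does not change homology, so the complex is exact in positive degrees with $H_0$ the diagonal~$R$. Its terms are flat over~$R^\yy$ too: in the notation above, $I^\xx \otimes_\kk J^\yy$ is a direct sum of copies of the flat orthant ideal~$J^\yy$ as an $R^\yy$-module. Hence $\oK^{\xx-\yy}_\spot$ is a flat $R^\yy$-resolution of the diagonal~$R$, which is free of rank one over~$R^\yy$ since the defining relations identify each $x_i$ with~$y_i$. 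Since flat resolutions compute $\Tor$,
\[
  H_j\bigl(\oK^{\xx-\yy}_\spot(M)\bigr)
  = H_j\bigl(\oK^{\xx-\yy}_\spot \otimes_{R^\yy} M^\yy\bigr)
  = \Tor_j^{R^\yy}(R, M^\yy),
\]
which vanishes for $j > 0$ and, for $j = 0$, equals $R \otimes_{R^\yy} M^\yy \cong M$, the isomorphism fusing the $\xx$- and $\yy$-actions through the diagonal. Since $\oK^{\xx-\yy}_\spot$ has length~$n$, so does $\oK^{\xx-\yy}_\spot(M)$.

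The principal obstacle is not any single computation but keeping the three copies of~$R$ apart: the $\Tor$ collapse requires the complex to be read as an $R^\yy$-resolution of the \emph{diagonal}~$R$; the orthant decomposition and flatness concern the $R^\xx$-structure; and the final $H_0 \cong M$ is precisely the fact that the diagonal projects isomorphically onto~$R^\xx$. Once that bookkeeping is in place, the remaining ingredients---the $\kk$-linear associativity isomorphism, flatness of orthant ideals, freeness of the diagonal over~$R^\yy$---are routine.
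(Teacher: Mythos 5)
Your proposal is correct, and the two halves of it relate to the paper's proof differently. The first half---sliding $\otimes_{R^\yy} M^\yy$ past the $\kk$-tensor factor in each summand $I^\xx \otimes_\kk J^\yy$ to exhibit $\oK^{\xx-\yy}_\spot(M)$ as a direct sum of orthant $R^\xx$-ideals $I^\xx \otimes_\kk (J^\yy \otimes_{R^\yy} M^\yy)$---is exactly the paper's argument, just spelled out more explicitly. The acyclicity argument, however, is genuinely different. The paper writes $\oK^{\xx-\yy}_\spot(M) = \dirlim_{\ve>0}\KK_\spot(\xx^{[\ve]}-\yy^{[\ve]})\otimes_{R^\yy}M^\yy$ and checks that each finite-level complex is a Koszul complex on a sequence that is regular on $R^{\otimes 2}\otimes_{R^\yy}M^\yy \cong R^\xx\otimes_\kk M^\yy$ (by algebraic independence of the $\xx$ from the $\yy$), so acyclicity and the identification of $H_0$ with $M$ pass through exactness of filtered colimits. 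You instead observe that the terms of $\oK^{\xx-\yy}_\spot$ are flat over $R^\yy$ (direct sums of copies of orthant $R^\yy$-ideals), so $\oK^{\xx-\yy}_\spot$ is a flat $R^\yy$-resolution of the diagonal copy of $R$ from Lemma~\ref{l:Rotimes2}, and tensoring with $M^\yy$ computes $\Tor_j^{R^\yy}(R,M^\yy)$, which collapses because the diagonal is $R^\yy$-free of rank one. Your route buys a cleaner conceptual statement (acyclicity is literally flat base change along $R^\yy \to R$) at the cost of invoking the flat-resolution lemma and the extra observation that the terms are $R^\yy$-flat; the paper's route stays entirely inside elementary Koszul regular-sequence technology and identifies the $H_0$ colimit by hand. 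Both are complete, and your identification of $H_0 \cong R\otimes_{R^\yy}M^\yy \cong M$ as an $R^\xx$-module via the diagonal is the same fusion of actions the paper records via the relations $x_i^\ve - y_i^\ve$.
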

\begin{proof}
The tensor product $\oK^{\xx-\yy}_\spot \otimes_{R^\yy} M^\yy$ is over
$R^\yy$ and hence converts the orthant $R^\xx$-ideal decomposition for
$\oK^{\xx-\yy}$ afforded by Remark~\ref{r:orthant} into one for
$\oK^{\xx-\yy}_\spot(M)$.

Since tensor products commute with colimits, $\oK^{\xx-\yy}_\spot(M) =
\dirlim_{\ve > 0} \KK^\ve_\spot(M)$, where $\KK^\ve_\spot(M) =
\KK_\spot(\xx^{[\ve]} - \yy^{[\ve]}) \otimes_{R^\yy} M^\yy$.
Each complex $\KK^\ve_\spot(M)$ is the ordinary Koszul complex of the
sequence $\xx^{[\ve]} - \yy^{[\ve]}$ on the $R^{\otimes 2}$-module
$R^{\otimes 2} \otimes_{R^\yy} M^\yy$.  But $\xx^{[\ve]} -
\yy^{[\ve]}$ is a regular sequence on this module because the $\xx$
variables are algebraically independent from the $\yy$ variables.
Thus $\KK^\ve_\spot(M)$ is acyclic by exactness of colimits.
Moreover, again by algebraic independence, the nonzero homology of
$\KK^\ve_\spot(M)$ is naturally the $R^\yy$-module~$M^\yy$, with an
action of $\kk[\xx^{[\ve]}]$ where $x_i^\ve$ acts the same way
as~$y_i^\ve$ due to the relation $x_i^\ve - y_i^\ve$.
\end{proof}

\begin{cor}\label{c:fldim}
The $n$-variable real-exponent polynomial ring has flat dimension~$n$.
\end{cor}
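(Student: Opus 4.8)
The plan is to bracket the flat dimension of~$R$ between~$n$ and~$n$, using resolutions already available. Recall that the flat dimension of the ring~$R$, its weak global dimension, is the supremum of $\fldim_R M$ over all $R$-modules~$M$, equivalently the largest~$k$ for which $\Tor_k^R(M,N)$ is nonzero for some $R$-modules $M$ and~$N$ \cite[Section~4.1]{weibel1994}.

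First I would establish $\fldim R \le n$. Fix an arbitrary $R$-module~$M$ and regard~$R$ as the subring $R^\xx \subseteq R^{\otimes 2}$, so that~$M$ is an $R^\xx$-module. By Theorem~\ref{t:flat-res} the open total Koszul complex $\oK^{\xx-\yy}_\spot(M)$ resolves~$M$ in homological degrees $0,\dots,n$, and each of its terms is flat over~$R^\xx$. Since exactness of a complex is a property of its underlying sequence of abelian groups, restriction of scalars along $R^\xx \subseteq R^{\otimes 2}$ leaves $\oK^{\xx-\yy}_\spot(M)$ an exact complex of flat $R$-modules augmented to~$M$, that is, a flat $R$-resolution of~$M$ of length~$n$. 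Hence $\fldim_R M \le n$, and as~$M$ was arbitrary, $\fldim R \le n$.

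For the opposite inequality it suffices to exhibit one nonzero Tor module in homological degree~$n$, and Example~\ref{e:usual-koszul} provides one: for each $\ve > 0$ the ordinary Koszul complex on the regular sequence $\xx^{[\ve]}$ gives $\Tor_n^R(\kk, B_n^\ve) = \kk \ne 0$, so $\fldim R \ge n$. (The open Koszul complex of Proposition~\ref{p:fldim(k)} shows in addition that $\fldim_R \kk = n$, so the residue field already realizes the bound.) Combining the two inequalities yields $\fldim R = n$. I anticipate no real obstacle: the two substantive inputs---a length-$n$ flat resolution of every $R$-module and a module with nonvanishing $n$th Tor against~$\kk$---are precisely Theorem~\ref{t:flat-res} and Example~\ref{e:usual-koszul}. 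The only point meriting attention is that Theorem~\ref{t:flat-res} presents its length-$n$ resolution over~$R^{\otimes 2}$ rather than over~$R$, but flatness over~$R^\xx$ is part of that statement and exactness survives restriction to the subring, so the bound descends to~$R$ intact.
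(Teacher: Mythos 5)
Your proposal is correct and follows exactly the paper's route: the lower bound from the nonvanishing $\Tor_n^R(\kk,B_n^\ve)$ of Example~\ref{e:usual-koszul} and the upper bound from the length-$n$ flat resolution of Theorem~\ref{t:flat-res}, restricted along $R^\xx \subseteq R^{\otimes 2}$. Your explicit remark that exactness survives restriction of scalars while flatness over $R^\xx$ is already part of the theorem is precisely the point the paper leaves implicit.
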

\begin{proof}
Example~\ref{e:usual-koszul} implies that $\fldim R \geq n$, and
$\fldim R \leq n$ by Theorem~\ref{t:flat-res}.
\end{proof}

%%%%%%%%%%%%%%%%%%%%%%%%%%%%%%%%%%%%%%%%%%%%%%%%%%%%%%%%%%%%%%%%%%%%%%%%%
\section{Global dimension~\texorpdfstring{$n + 1$}{n+1}}\label{s:gldim}%%
%%%%%%%%%%%%%%%%%%%%%%%%%%%%%%%%%%%%%%%%%%%%%%%%%%%%%%%%%%%%%%%%%%%%%%%%%

\begin{lemma}\label{l:orthant-res}
Fix an orthant ideal~$\OO \neq R$.  Choose a sequence
$\{\ee_k\}_{k\in\NN}$ such that $\ee_k =
(\ve_{1k},\dots,\ve_{nk}) \in \RR^n_+$ has
\begin{itemize}
\item%
$\ve_{ik} = 0$ for all~$k$ if the $i^\mathrm{th}$ factor of~$\OO$
is~$R_1$ and
\item%
$\{\ve_{ik}\}_{k \in \NN}$ strictly decreases with limit~$0$ if the
$i^\mathrm{th}$ factor of~$\OO$ is~$\mm_1$.
\end{itemize}
Let $F = \bigoplus_k \<\xx^{\ee_k}\>$ be the direct sum of the
principal ideals in~$R$ generated by the monomials with
degrees~$\ee_k$.  Each summand $\<\xx^{\ee_k}\>$ is free with basis
vector~$1_k$, and $\OO$ has a free resolution \mbox{$0 \from F \from F
\from 0$} whose differential sends $1_k \in \<\xx^{\ee_k}\>$ to
\mbox{$1_k - \xx^{\ee_k - \ve_{k+1}}1_{k+1}$}.
\end{lemma}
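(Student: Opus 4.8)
The plan is to realize the orthant ideal~$\OO$ as a sequential colimit of principal ideals and then obtain the asserted complex as the standard mapping telescope of that colimit. Concretely, I would prove that the principal ideals $\<\xx^{\ee_k}\>$ form an ascending chain of submodules of~$R$ whose union is~$\OO$, so that $\OO = \dirlim_k \<\xx^{\ee_k}\>$ along the inclusions, that each $\<\xx^{\ee_k}\>$ is free of rank one, and that the telescope presentation of that colimit is precisely the complex in the statement.

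For the chain, the hypothesis on the~$\ee_k$ gives $\ve_{i,k+1} \le \ve_{ik}$ for every~$i$ (equality, both zero, on the $R_1$-factors; strict decrease on the $\mm_1$-factors), so $\ee_{k+1} \le \ee_k$ coordinatewise, hence $\xx^{\ee_{k+1}}$ divides~$\xx^{\ee_k}$ and $\<\xx^{\ee_k}\> \subseteq \<\xx^{\ee_{k+1}}\>$; moreover each $\<\xx^{\ee_k}\> \subseteq \OO$ by the monomial description of~$\OO$ as the $\kk$-span of the monomials~$\xx^\aa$ with $a_i = 0$ on the $R_1$-factors and $a_i > 0$ on the $\mm_1$-factors. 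The step that genuinely uses the hypothesis is the reverse inclusion $\OO \subseteq \bigcup_k\<\xx^{\ee_k}\>$: given a monomial~$\xx^\aa$ as above, the fact that $\ve_{ik}$ strictly decreases to~$0$ on each of the \emph{finitely many} $\mm_1$-factors lets me choose a single index~$k$ with $\ee_k \le \aa$, whence $\xx^\aa = \xx^{\aa - \ee_k}\xx^{\ee_k} \in \<\xx^{\ee_k}\>$. This density assertion is the main (indeed essentially the only) thing to check; the conceptual point is that it exhibits $\OO$ as the relevant colimit, after which everything is formal.

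To finish, I would make the identifications explicit. Since $R$ is a domain, multiplication by~$\xx^{\ee_k}$ is an isomorphism $R \to \<\xx^{\ee_k}\>$ sending~$1$ to the basis vector~$1_k$; thus $F = \bigoplus_k\<\xx^{\ee_k}\>$ is free, and under these bases the inclusion $\<\xx^{\ee_k}\> \into \<\xx^{\ee_{k+1}}\>$ is multiplication by the monomial~$\xx^{\ee_k - \ee_{k+1}}$, i.e.\ it sends~$1_k$ to $\xx^{\ee_k - \ee_{k+1}}1_{k+1}$, because $\xx^{\ee_k} = \xx^{\ee_k - \ee_{k+1}}\cdot\xx^{\ee_{k+1}}$. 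The mapping-telescope sequence for a colimit indexed by~$\NN$ is then the free resolution $0 \from F \from F \from 0$ with augmentation $\epsilon\colon F \to \OO$, $\,1_k \mapsto \xx^{\ee_k}$, and differential $d\colon F \to F$, $\,1_k \mapsto 1_k - \xx^{\ee_k - \ee_{k+1}}1_{k+1}$.

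Its exactness I would verify by the three standard observations. First, $\epsilon$ is onto because $\bigcup_k\<\xx^{\ee_k}\> = \OO$. Second, $\epsilon\, d = 0$ since $\xx^{\ee_k} = \xx^{\ee_k - \ee_{k+1}}\xx^{\ee_{k+1}}$, and in fact $\image d = \ker\epsilon$ because the kernel of the canonical surjection from $\bigoplus_k\<\xx^{\ee_k}\>$ onto the filtered colimit~$\OO$ is exactly the span of the telescoping differences $1_k - \xx^{\ee_k - \ee_{k+1}}1_{k+1}$. Third, $d$ is injective because a nonzero element of the direct sum~$F$ has a least-index nonzero component, which $d$ cannot cancel. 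I do not expect any of these to pose difficulty beyond bookkeeping; the one point worth stating carefully is the density claim $\bigcup_k\<\xx^{\ee_k}\> = \OO$, as it is where the hypotheses on the sequence $\{\ee_k\}$ are actually used.
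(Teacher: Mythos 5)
Your proposal is correct and matches the paper's argument in all essentials: both exhibit $\OO$ as the union of the ascending chain of principal ideals $\<\xx^{\ee_k}\>$, identify the kernel of the augmentation with the $R$-span of the telescoping differences, and prove injectivity of the differential by examining an extremal-index component. The only cosmetic difference is that the paper verifies the middle exactness degree by degree using the $\RR^n_+$-grading (a codimension-one count in each graded piece of~$F$), whereas you invoke the standard mapping-telescope presentation of a sequential colimit; the content is the same.
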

\begin{proof}
The augmentation map $\OO \overset{\;\alpha}\ffrom F$ sends $1_k$
to~$\xx^{\ee_k}$.  It is surjective by definition of~$\OO$.  Since
$\alpha$ is graded by the monoid~$\RR^n_+$, its kernel can be
calculated degree by degree.  In degree $\aa \in \RR_+$ the kernel is
spanned by all differences $\xx^{\aa-\ee_k} 1_k -
\xx^{\aa-\ee_\ell} 1_\ell$ such that $\ee_k$ and~$\ee_\ell$
both weakly precede~$\aa$; indeed, this subspace of the $\aa$-graded
component~$F_\aa$ has codimension~$1$, and it is contained in the
kernel because $\xx^{\aa - \ee_k} \xx^{\ee_k} = \xx^{\aa -
\ee_\ell} \xx^{\ee_\ell}$.  The differential is injective
because each element $f \in F$ has nonzero coefficient on a basis
vector~$1_k$ with $k$ maximal, and the image of~$f$ has nonzero
coefficient on $1_{k+1}$.
\end{proof}

\begin{lemma}\label{l:Ext(k,F)}
$\kk = R_1/\mm_1$ has a free resolution of length~$2$, and
$\Ext^2_{R_1}(\kk,F) \neq 0$.
\end{lemma}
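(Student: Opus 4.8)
The plan is to exhibit a length-$2$ free resolution of~$\kk$ over~$R_1$ and to read $\Ext^2_{R_1}(\kk,F)$ off it directly.  For the resolution I would splice the tautological presentation $0 \from \kk \from R_1 \from \mm_1 \from 0$ with the two-term free resolution of~$\mm_1$ supplied by Lemma~\ref{l:orthant-res}: applying that lemma with $n=1$ to the orthant ideal $\OO = \mm_1$ and a strictly decreasing sequence $\ve_0 > \ve_1 > \cdots$ with limit~$0$ gives the free module $F = \bigoplus_{k\in\NN}\<x^{\ve_k}\>$ on the basis $\{1_k\}$, a surjection $F \onto \mm_1$ with $1_k \mapsto x^{\ve_k}$, and an injection $\del_2\colon F \to F$ with $1_k \mapsto 1_k - x^{\ve_k - \ve_{k+1}}1_{k+1}$ whose image is the kernel of that surjection.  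Composing the surjection with $\mm_1 \into R_1$ yields a length-$2$ free resolution $0 \from \kk \from R_1 \from F \from F \from 0$ of~$\kk$ whose last differential is~$\del_2$; this proves the first assertion and identifies $\Ext^2_{R_1}(\kk,F)$ with $\coker\bigl(\del_2^*\colon \Hom_{R_1}(F,F)\to\Hom_{R_1}(F,F)\bigr)$.

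It then remains to show $\del_2^*$ is not surjective, and the candidate non-coboundary I would use is the identity map~$\id_F$ (automatically a cocycle, being in the top cochain group).  Suppose $\id_F = \psi\circ\del_2$ for some $\psi\colon F \to F$, and write $\psi(1_k) = \sum_j p_{kj}1_j$ with $p_{kj}\in R_1$ (finitely many nonzero for each~$k$).  Comparing the coefficient of~$1_j$ on the two sides of $1_k = \psi(1_k) - x^{\ve_k-\ve_{k+1}}\psi(1_{k+1})$ gives $\delta_{kj} = p_{kj} - x^{\ve_k-\ve_{k+1}}p_{k+1,j}$ for all $k,j\in\NN$.  Fixing $j\geq 1$ and telescoping the equations for $k = 0,1,\dots,j-1$ (all of which have $\delta_{kj}=0$) gives $p_{0j} = x^{\ve_0-\ve_j}p_{jj}$.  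Because $\psi(1_0)$ has only finitely many nonzero coordinates and $R_1$ is a domain, $p_{jj} = 0$ for all large~$j$; for such~$j$ the equation with $k = j$ becomes $1 = -x^{\ve_j-\ve_{j+1}}p_{j+1,j}$, forcing the monomial $x^{\ve_j-\ve_{j+1}}$ of positive degree $\ve_j - \ve_{j+1}$ to divide a unit in~$R_1$.  That is impossible since $R_1^\times = \kk^\times$, so $\id_F\notin\image\del_2^*$ and hence $\Ext^2_{R_1}(\kk,F)\neq 0$.

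The one delicate point is the step forcing $p_{jj}=0$ for large~$j$: it uses nothing but the finiteness of the support of $\psi(1_0)$ in the \emph{infinite} direct sum~$F$, that is, the failure of $\Ext$ to commute with that direct sum, which is precisely where the ``open orthant'' of exponents of~$\mm_1$ enters, in contrast to a principal ideal such as $\<x^\ve\>$, which is free.  The same computation shows there is no $\psi$ with $\psi\circ\del_2 = \id_F$, equivalently that the surjection $F \onto \mm_1$ does not split and $\mm_1$ is not projective; but the explicit resolution is what upgrades this to the sharper statement $\Ext^2_{R_1}(\kk,F)\neq 0$.
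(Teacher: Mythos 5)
Your proposal is correct, and its first half---splicing $0 \from \kk \from R_1 \from \mm_1 \from 0$ with the two-term resolution $0 \from F \from F \from 0$ of~$\mm_1$ from Lemma~\ref{l:orthant-res}---is exactly what the paper does.  Where you diverge is the nonvanishing argument.  The paper dimension-shifts via $\Ext^2_{R_1}(\kk,F) \cong \Ext^1_{R_1}(\mm_1,F)$ and then applies $\Hom(\mm_1,-)$ to $0 \to F \to F \to \mm_1 \to 0$, using the identification $\Hom(\mm_1,\mm_1) \cong R_1$ and the observation that any endomorphism of~$\mm_1$ factoring through the direct sum~$F$ is multiplication by an element of~$\mm_1$; this exhibits an explicit copy of $\kk = R_1/\mm_1$ inside $\Ext^1(\mm_1,F)$.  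You instead compute $\Ext^2_{R_1}(\kk,F) = \coker(\del_2^*)$ directly from the top of the resolution and show by an explicit matrix calculation that $\id_F$ is not of the form $\psi\circ\del_2$: the telescoped identity $p_{0j} = x^{\ve_0-\ve_j}p_{jj}$, finiteness of the support of $\psi(1_0)$, and the fact that $x^{\ve_j-\ve_{j+1}}$ is a nonunit in the domain~$R_1$ together give the contradiction.  Your route is more elementary---it avoids the small but nontrivial structural facts that $\Hom(\mm_1,\mm_1)\cong R_1$ and that maps out of the non--finitely generated module~$\mm_1$ into a direct sum land in finitely many summands---at the cost of a hands-on computation; the paper's route is shorter on the page and identifies the nonzero class more structurally (as the image of $1 \in R_1/\mm_1$ under a connecting map).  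Both arguments ultimately rest on the same two phenomena, which you correctly isolate: finite support in the infinite direct sum~$F$ and strict positivity of the exponents $\ve_j - \ve_{j+1}$.
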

\begin{proof}
The resolution of~$\mm_1$ over~$R_1$ in Lemma~\ref{l:orthant-res}
(with $n = 1$) can be augmented and composed with the inclusion $R_1
\otni \mm_1$ to yield a free resolution of~$\kk$ over~$R_1$.  The long
exact sequence from $0 \from \kk \from R_1 \from \mm_1 \from 0$
implies that $\Ext^{i+1}_{R_1}(\kk,-) \cong \Ext^i_{R_1}(\mm_1,-)$ for
$i \geq 1$.  Now apply $\Hom(\mm_1,-)$ to the exact sequence $0 \to F
\to F \to \mm_1 \to 0$.  The first few terms are $0 \to \Hom(\mm_1,F)
\to \Hom(\mm_1,F) \to R_1 \to \Ext^1(\mm_1,F)$.  The image of
$\Hom(\mm_1,F) \to R_1$ is~$\mm_1$, so $\kk \into \Ext^1(\mm_1,F)
\cong \Ext^2(\kk,F)$ is nonzero.
\end{proof}

\begin{remark}\label{r:osofsky}
Any ideal that is a countable (but not finite) union of a chain of
principal ideals has projective dimension~$1$
\cite[p.14]{osofsky1974}.  But it is convenient to have an explicit
free resolution of~$\mm_1$ over~$R_1$, and it is no extra work to
resolve all orthant ideals.
\end{remark}

\begin{prop}\label{p:n+1}
Set $\mm_1 = \<x_n^\ve \mid \ve > 0\>$ and $J = \<x_1,\dots,x_{n-1}\>
\subseteq R$.  Using $x = x_n$ for~$R_1$, consider the $R_1$-module
$F$ in Lemma~\ref{l:Ext(k,F)} with $n = 1$ as an $R$-module via $R
\hspace{-.2ex}\onto\hspace{-.4ex} R_1$, where $x_i^\ve \mapsto 0$ for
all~$\ve > 0$ and $i \leq n-1$.  Then $\Ext^{n+1}_R(R/I,F) \neq 0$
when~$I = J + \mm_1$.
\end{prop}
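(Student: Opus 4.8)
The plan is to produce an explicit free resolution of $R/I$ over~$R$ of length exactly $n+1$ by tensoring the ordinary Koszul complex on $x_1,\dots,x_{n-1}$ with the length-$2$ free resolution of~$\kk$ over~$R_1$ built in the proof of Lemma~\ref{l:Ext(k,F)}, and then to read off $\Ext^{n+1}_R(R/I,F)$ by applying $\Hom_R(-,F)$ and invoking the K\"unneth theorem over the field~$\kk$ twice.

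Write $R = R_{n-1}\otimes R_1$ with $R_{n-1} = \kk[\RR^{n-1}_+]$ in the variables $x_1,\dots,x_{n-1}$ and $R_1$ in the variable~$x_n$. Because $x_1,\dots,x_{n-1}$ is a regular sequence in~$R_{n-1}$ (Example~\ref{e:usual-koszul} with $\ve=1$), the ordinary Koszul complex $\KK_\spot(x_1,\dots,x_{n-1})$ is a \emph{finitely generated} free resolution of $R_{n-1}/\<x_1,\dots,x_{n-1}\>$ of length~$n-1$. Let $Q_\spot\to\kk$ be the length-$2$ free resolution of~$\kk$ over~$R_1$ from the proof of Lemma~\ref{l:Ext(k,F)}, with terms $R_1,F,F$ in homological degrees $0,1,2$. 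Then $C_\spot := \KK_\spot(x_1,\dots,x_{n-1})\otimes_\kk Q_\spot$ is a complex of free $R$-modules of length $n+1$, and by the K\"unneth theorem over the field~$\kk$ (so with no $\Tor$ terms) its homology is $H_\spot(\KK_\spot)\otimes_\kk H_\spot(Q_\spot)$, concentrated in degree~$0$, where it is $(R_{n-1}/\<x_1,\dots,x_{n-1}\>)\otimes_\kk\kk$. Identifying $I = \<x_1,\dots,x_{n-1}\>+\mm_1$ as a sum of ideals extended from~$R_{n-1}$ and from~$R_1$ gives $R/I\cong (R_{n-1}/\<x_1,\dots,x_{n-1}\>)\otimes_\kk(R_1/\mm_1) = (R_{n-1}/\<x_1,\dots,x_{n-1}\>)\otimes_\kk\kk$, so $C_\spot$ is a free resolution of~$R/I$ over~$R$ and $\Ext^{n+1}_R(R/I,F) = H^{n+1}\Hom_R(C_\spot,F)$.

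For the final computation, observe that the structure map $R\onto R_1$ of the statement makes $F$ into the $R$-module $\kk\otimes_\kk F$ with $R_{n-1}$ acting through $R_{n-1}\onto\kk$ and $R_1$ acting on the second factor as originally. Since the terms of $\KK_\spot(x_1,\dots,x_{n-1})$ are finitely generated free $R_{n-1}$-modules, $\Hom_R$ commutes with both the tensor product over~$\kk$ and the (possibly infinite) direct sums in~$Q_\spot$, producing an isomorphism of complexes
\[
  \Hom_R(C_\spot,F)\;\cong\;\Hom_{R_{n-1}}\bigl(\KK_\spot(x_1,\dots,x_{n-1}),\kk\bigr)\otimes_\kk\Hom_{R_1}(Q_\spot,F).
\]
The Koszul differential acts on~$\kk$ as multiplication by the~$x_i$, hence as zero, so the first tensor factor has vanishing differential, with degree-$i$ cohomology $\kk^{\binom{n-1}{i}}$; the cohomology of the second factor is $\Ext^\spot_{R_1}(\kk,F)$. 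One more application of the K\"unneth theorem over~$\kk$, combined with the fact that the two factors have lengths $n-1$ and~$2$ (so total degree $n+1$ forces $i=n-1$, $j=2$), gives $\Ext^{n+1}_R(R/I,F)\cong\kk\otimes_\kk\Ext^2_{R_1}(\kk,F)$, which is nonzero by Lemma~\ref{l:Ext(k,F)}.

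The one step that needs genuine care is the displayed isomorphism: finiteness of the ranks of the terms of $\KK_\spot(x_1,\dots,x_{n-1})$ is essential, since without it $\Hom_R$ applied to the infinite direct sums in~$Q_\spot$ would introduce a direct product on one side of the identity but not the other. Everything else is routine bookkeeping with the K\"unneth theorem over a field together with the regularity already recorded in Example~\ref{e:usual-koszul}.
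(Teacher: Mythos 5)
Your argument is correct and matches the paper's own proof in all essentials: both resolve $R/I$ by the total complex of $\KK_\spot(x_1,\dots,x_{n-1})\otimes_\kk Q_\spot$ and then reduce $\Ext^{n+1}_R(R/I,F)$ to $\Ext^2_{R_1}(\kk,F)$ using that the $R$-action on $F$ factors through $R_1$ and that the Koszul terms are finitely generated free (the paper phrases this via Hom--tensor adjunction and the free dual $(\KK^R_\spot)^*$, you via a K\"unneth splitting of the Hom complex, but it is the same computation). Your closing remark correctly isolates the one point needing care, the finite rank of the Koszul terms, which the paper uses implicitly in the isomorphism $\Hom_R(\KK^R_\spot,F)\cong(\KK^R_\spot)^*\otimes_R F$.
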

\begin{proof}
Let $\FF_\spot: 0 \from R_1 \from F \from F \from 0$ be the $R_1$-free
resolution of~$\kk$ obtained by augmenting the resolution of~$\mm_1$
in Lemma~\ref{l:orthant-res} with $n = 1$.  Let $\KK_\spot =
\KK_\spot^{R_{n-1}}(\xx_{n-1})$ be the ordinary Koszul complex over
$R_{n-1}$ on the sequence $\xx_{n-1} = x_1,\dots,x_{n-1}$, which is a
free resolution of $R_{n-1}/\xx_{n-1} R_{n-1}$ over~$R_{n-1}$.  Then
$\Tot(\FF_\spot \otimes_\kk \KK_\spot)$ is a free resolution of $R/I$
over~$R$.  On the other hand,
\begin{align*}
  \FF_\spot \otimes_\kk \KK_\spot
 &\cong
  \FF_\spot\otimes_{R_1}R_1 \otimes_\kk R_{n-1}\otimes_{R_{n-1}}\KK_\spot
\\&\cong
  \FF_\spot\otimes_{R_1} R \otimes_{R_{n-1}}\KK_\spot
\\&\cong
  \FF_\spot\otimes_{R_1}R \otimes_R R\otimes_{R_{n-1}}\KK_\spot
\\&=
  \FF_\spot^R \otimes_R \KK_\spot^R,
\end{align*}
where $\FF_\spot^R = \FF_\spot\otimes_{R_1}R$ is an $R$-free
resolution of~$R/\mm_1 R$ and the ordinary Koszul complex $\KK_\spot^R
= R\otimes_{R_{n-1}}\KK_\spot = \KK_\spot^R(\xx_{n-1})$ of the
sequence $\xx_{n-1}$ in~$R$ is an $R$-free resolution of~$R/J$.

Using $(-)^*$ to denote the free dual $\Hom_R(-,R)$, compute
\begin{align}\label{eq:Hom}
\Hom_R(\FF_\spot^R \otimes_R \KK_\spot^R, F)
 &\cong
  \Hom_R\big(\FF_\spot^R, \Hom_R(\KK_\spot^R, F)\big)
\\\nonumber&\cong
  \Hom_R(\FF_\spot^R, (\KK_\spot^R)^* \otimes_R F)
\\\nonumber&\cong
  \Hom_R(\FF_\spot^R, (\KK_\spot^R)^* \otimes_R R_1 \otimes_{R_1} F),
\end{align}
where the bottom isomorphism is because the $R$-action on~$F$ factors
through~$R_1$.  The differentials of the complex $(\KK_\spot^R)^*
\otimes_R R_1 \cong (\KK_\spot^R)^* \otimes_{R_{n-1}} \kk$
% \comment{this is because both $N \otimes_R R_1$ and $N
% \otimes_{R_{n-1}} \kk$ yield $N/\xx_{n-1} N$}
all vanish, and this complex has cohomology $R_1^{\binom{n-1}q}$ in
degree~$q$.  Hence the total complex of~Eq.~(\ref{eq:Hom}) has
homology
\pagebreak[1]
\begin{align*}
\Ext^i_R(R/I,F)
 &\cong
  \bigoplus_{p+q=i} H_p \Hom_R\big(\FF_\spot^R, F^{\binom{n-1}q}\big)
\\&\cong
  \bigoplus_{p+q=i} H_p \Hom_{R_1}\big(\FF_\spot, F^{\binom{n-1}q}\big)
\\&\cong
  \bigoplus_{p+q=i} \Ext^p_{R_1}\big(\kk, F^{\binom{n-1}q}\big),
\end{align*}
where the middle isomorphism is again because the $R$-action on~$F$
factors through~$R_1$.  Taking $p = 2$ and $q = n-1$ yields the
nonvanishing by Lemma~\ref{l:Ext(k,F)}.
\end{proof}

\begin{remark}\label{r:grothendieck}
The proof of Proposition~\ref{p:n+1} is essentially a Grothendieck
spectral sequence for the derived functors of the composite
$\Hom_{R_1}(\kk,-) \circ \Hom_{R_{n-1}}(R_{n-1}/\xx_{n-1},-)$ but the
elementary Koszul argument isn't more lengthy
% \comment{in particular because one needs to verify that the functor
% we want is indeed this composite, and that the first functor takes
% injectives to acyclics for the second functor}
than verifying the hypotheses.
\end{remark}

\begin{thm}\label{t:gl.dim}
The $n$-variable real-exponent polynomial ring has global
dimension~\mbox{$n\!+\!1$}.\!
\end{thm}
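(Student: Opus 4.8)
The plan is to establish the two inequalities $\gldim R \geq n+1$ and $\gldim R \leq n+1$ separately. The lower bound is immediate: Proposition~\ref{p:n+1} exhibits a module $R/I$ and an $R$-module $F$ with $\Ext^{n+1}_R(R/I,F) \neq 0$, so by the characterization of global dimension via vanishing of Ext \cite[Theorem~4.1.2]{weibel1994} we get $\gldim R \geq n+1$. All the real work is in the upper bound, i.e.\ in producing a projective resolution of length at most $n+1$ for an arbitrary $R$-module~$M$.

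\smallskip
For the upper bound, the plan is to start from the flat resolution of~$M$ of length~$n$ furnished by Theorem~\ref{t:flat-res}, namely the open total Koszul complex $\oK^{\xx-\yy}_\spot(M)$, which as an $R^\xx$-module is a direct sum of orthant $R^\xx$-ideals. The idea is to convert this flat resolution into a projective one at the cost of a single extra homological degree. Each orthant ideal $\OO$ appearing as a summand is, by Definition~\ref{d:open-koszul}, a tensor product of copies of $R_1$ and $\mm_1$; and by Lemma~\ref{l:orthant-res} every orthant ideal $\OO \neq R$ has a free resolution of length~$1$, $0 \from F \from F \from 0$, while $\OO = R$ is already free. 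Hence each flat term $\oK^{\xx-\yy}_j(M)$, being a direct sum of orthant $R^\xx$-ideals, admits an $R^\xx$-free resolution of length~$1$ (take the direct sum of the resolutions from Lemma~\ref{l:orthant-res}). Replacing each flat term by its length-$1$ free resolution and forming the total complex of the resulting double complex yields an $R$-free (in particular projective) resolution of~$M$ whose length is at most $n + 1$. This is the standard ``resolve the terms of a resolution'' maneuver, and the key point that makes it give $n+1$ rather than something larger is precisely that orthant ideals have projective dimension exactly~$1$, which is Lemma~\ref{l:orthant-res} combined with Lemma~\ref{l:Ext(k,F)}.

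\smallskip
There is one subtlety to address carefully: the resolution $\oK^{\xx-\yy}_\spot(M)$ is a resolution over $R^{\otimes 2}$, not over $R = R^\xx$, so one must check that passing to the $R^\xx$-module structure and resolving over $R^\xx$ genuinely produces a resolution of~$M$ as an $R^\xx$-module. Since exactness of a complex of abelian groups does not depend on which ring we view it over, the complex $\oK^{\xx-\yy}_\spot(M) \to M \to 0$ is exact as a complex of $R^\xx$-modules; its terms are flat over $R^\xx$; and replacing each term by a length-$1$ $R^\xx$-free resolution and totalizing preserves exactness by the usual acyclic-assembly / spectral-sequence argument \cite[Section~5.7]{weibel1994}. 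The main obstacle, such as it is, is bookkeeping: one must verify that the double complex is bounded appropriately so that the total complex computes the right thing, and that the horizontal length is genuinely~$1$ (not~$0$, which would be even better, nor~$\geq 2$). Both follow from Lemma~\ref{l:orthant-res}, so no new ideas are needed beyond what has already been set up.

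\begin{proof}
Proposition~\ref{p:n+1} produces an $R$-module $R/I$ and an $R$-module~$F$ with $\Ext^{n+1}_R(R/I, F) \neq 0$, so $\gldim R \geq n + 1$ by \cite[Theorem~4.1.2]{weibel1994}.

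For the reverse inequality, fix an $R^\xx$-module~$M$, writing $R = R^\xx$. By Theorem~\ref{t:flat-res} the open total Koszul complex $\oK^{\xx-\yy}_\spot(M)$ is a length~$n$ resolution of~$M$, exact as a complex of abelian groups and hence as a complex of $R$-modules, and each term $\oK^{\xx-\yy}_j(M)$ is a direct sum of orthant $R$-ideals. By Lemma~\ref{l:orthant-res}, every orthant ideal $\OO \neq R$ has an $R$-free resolution $0 \from F \from F \from 0$ of length~$1$, and the orthant ideal~$R$ is itself free; taking direct sums, each $\oK^{\xx-\yy}_j(M)$ has an $R$-free resolution $P_{j,\spot}$ with $P_{j,p} = 0$ for $p \notin \{0,1\}$. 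The double complex $P_{\spot,\spot}$ is bounded (it lives in the region $0 \le j \le n$, $0 \le p \le 1$), so its total complex $\Tot(P_{\spot,\spot})$ is a complex of free $R$-modules of length at most $n + 1$, and the standard acyclic assembly argument \cite[Section~5.7]{weibel1994} shows that $\Tot(P_{\spot,\spot})$ is a resolution of~$M$: filtering by columns, the associated spectral sequence has $E_1$-page equal to $\oK^{\xx-\yy}_\spot(M)$ concentrated in the $p = 0$ row (since each $P_{j,\spot}$ resolves $\oK^{\xx-\yy}_j(M)$), which in turn has homology $M$ in degree~$0$. Hence every $R$-module has projective dimension at most $n + 1$, so $\gldim R \le n + 1$, and therefore $\gldim R = n + 1$.
\end{proof}
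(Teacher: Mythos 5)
Your proposal is correct and takes essentially the same route as the paper: the lower bound comes from Proposition~\ref{p:n+1}, and the upper bound is obtained by resolving each term of the length-$n$ flat resolution of Theorem~\ref{t:flat-res} by the length-$1$ free resolution of Lemma~\ref{l:orthant-res} and passing to the total complex. The only cosmetic difference is that the paper explicitly invokes the comparison theorem \cite[Theorem~2.2.6]{weibel1994} to produce the horizontal differentials of the double complex, whereas you take their existence for granted and instead emphasize the acyclic-assembly argument for exactness of the totalization --- each of you makes explicit the step the other leaves implicit.
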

\begin{proof}
Proposition~\ref{p:n+1} yields the lower bound $\gldim R \geq n + 1$.
For the opposite bound, given any $R$-module~$M$, each module in the
length~$n$ flat resolution from Theorem~\ref{t:flat-res} has a free
resolution of length at most~$1$ by Lemma~\ref{l:orthant-res}.  By the
comparison theorem for projective resolutions
\cite[Theorem~2.2.6]{weibel1994}, the differentials of the flat
resolution lift to chain maps of these free resolutions.  The total
complex of the resulting double complex has length at most~$n + 1$.
\end{proof}

\begin{remark}\label{r:pd(k)}
As an $\RR^n$-graded module, the quotient $R/I$ in
Proposition~\ref{p:n+1} is nonzero only in degrees from $\RR^{n-1}
\subseteq \RR^n$.  Hence $R/I$ is ephemeral \cite{berkouk-petit2021},
meaning more or less that its set of nonzero degrees has measure~$0$.
The projective dimension exceeding~$n$ is not due solely to this
ephemerality.  Indeed, multiplication by~$x_n^1$ induces an inclusion
of $R/I$ into $R/I'$ for $I' = \<x_1,\dots,x_{n-1}\> + \<x_n^\ve \mid
\ve > 1\>$, which is supported on a unit cube in~$\RR^n_+$ that is
% \comment{closed at the top but open at the vertical faces not
% containing the origin}
neither open nor closed.
% $0 \from cokernel \from R/I' \from R/I \from 0 \implies
% \Ext^{n+1}_R(R/I',N) \onto \Ext^{n+1}_R(R/I,N)$
Theorem~\ref{t:gl.dim} implies that $\Ext^{n+1}_R(R/I',N)
\onto\nolinebreak \Ext^{n+1}_R(x_n R/I,N)$ is surjective for all
modules~$N$, so $R/I'$ has projective dimension $n+1$.  On the other
hand, it could be the closed right endpoints
\cite{functorial-endpoints}---that is, closed socle elements
\cite[Section~4.1]{essential-real}---that cause the problem.
% posit that
% In other words,
Thus it could be
% possibly
that sheaves in the conic topology (``\mbox{$\gamma$-topology}''; see
\cite{kashiwara-schapira2018,strat-conical,berkouk-petit2021}) have
consistently lower projective dimensions\mkl{.}
\end{remark}

%%%%%%%%%%%%%%%%%%%%%%%%%%%%%%%%%%%%%%%%%%%%%%%%%%%%%%%%%%%%%%%%%%%%%%%%%
\section{Dense exponent sets}\label{s:dense}%%%%%%%%%%%%%%%%%%%%%%%%%%%%%
%%%%%%%%%%%%%%%%%%%%%%%%%%%%%%%%%%%%%%%%%%%%%%%%%%%%%%%%%%%%%%%%%%%%%%%%%

The results in Sections~\ref{s:fldim} and~\ref{s:gldim} extend to
monoid algebras for positive cones of subgroups of~$\RR^n$ satisfying
a mild density condition.  Applications to noncommutative toric
geometry should require restriction to subgroups of this sort.

\begin{defn}\label{d:G}
Let $G \subseteq \RR^n$ be a subgroup whose intersection with each
coordinate ray $\rho$ of\/~$\RR^n$ is dense.
% \comment{It is possible for a subgroup to be dense without being
% dense in every coordinate subspace.  By change of basis this amounts
% to saying that a dense subgroup can miss an entire rational subspace.
% Now simply note that the subgroup generated by {(1,0), (\sqrt 2,0),
% (0,\pi), (0,e)} is dense in each axis and hence dense in the plane,
% but it intersects the diagonal trivially: a + b\sqrt 2 = c\pi + d e
% ==> a = c\pi + d e - b\sqrt 2 is rational.  Even if \pi and e happen
% to be linearly dependent over the rationals, it would be easy enough
% to pick four real numbers that are indeed linearly independent over
% the rationals to make this argument work.}
Write \raisebox{0pt}[11pt][0pt]{$G_+ = G \cap \RR^n_+$} for the
positive cone in~$G$, set $\orho = \rho \cap \RR^n_+ \minus \{\0\}$,
and let \raisebox{0pt}[11pt][0pt]{$\oG_+ = \prod_\rho G \cap \orho_+$}
be the set of points in~$G$ whose projections to all coordinate rays
are strictly positive and still lie in~$G$.  Set $R_G = \kk[G_+]$, the
monoid algebra of~$G_+$ over~$\kk$.  Let $R_G^\xx$~and~$R_G^\yy$ be
the copies of~$R_G$ embedded in $R_G^{\otimes 2}$ as $R_G \otimes
1$~and~$1 \otimes R_G$.  For $\ee \in \oG_+$ let $\xx^{[\ee]} =
x_1^{\ve_1},\dots,x_n^{\ve_n}$ be the corresponding sequence of
elements in~$R_G$.
\begin{enumerate}
\item%
The \emph{open Koszul complex} over~$R_G$ is the colimit
$\oK_\spot^\xx = \dirlim_{\ee \in \oG_+} \KK_\spot(\xx^{[\ee]})$.
\vspace{.3ex}

\item%
Fix an $R_G^\xx$-module~$M$.  Write $M^\yy$ for the corresponding
$R_G^\yy$-module, with the $\xx$ variables renamed to~$\yy$.  With
notation for variables as in Lemma~\ref{l:Rotimes2}, the \emph{open
total Koszul complex} of~$M$ is the colimit $\oK^{\xx-\yy}_\spot(M) =
\dirlim_{\ee \in \oG_+} \KK_\spot(\xx^{[\ee]}-\nolinebreak\yy^{[\ee]})
\otimes_{R^\yy}\nolinebreak M^\yy$.

\item%
Given a subset $\sigma \subseteq \{1,\dots,n\}$, the \emph{orthant
ideal} $I_\sigma \subseteq R_G$ is generated by all monomials
$\xx^{\ee}$ for $\ee \in G_+$ such that $\ve_i > 0$ for all $i \in
\sigma$.
\end{enumerate}
\end{defn}

\begin{example}\label{e:dense}
Let $G$ be generated by
% $\bigl[\twoline 20\bigr], \bigl[\twoline \pi 0\bigr], \bigl[\twoline
% 02\bigr], \bigl[\twoline 0e\bigr], \bigl[\twoline 11\bigr]$
$\bigl[\twoline 20\bigr], \bigl[\twoline \pi 0\bigr], \bigl[\twoline
11\bigr], \bigl[\twoline 0e\bigr]$ as a subgroup of~$\RR^2$, so $G$
consists of the integer linear combinations of these four vectors.
The intersection $G \cap \rho^y$ with the $y$-axis $\rho^y$ arises
from integer coefficients $\alpha$, $\beta$, $\gamma$, and $\delta$
such that
$$%
  \bigl[\twoline 0y\bigr]
  =
  \alpha\bigl[\twoline 20\bigr] + \beta\bigl[\twoline \pi 0\bigr] +
  \gamma\bigl[\twoline 11\bigr] + \delta\bigl[\twoline 0e\bigr].
% \ \iff\
% 2\alpha + \pi\beta + \gamma = 0,
$$
This occurs precisely when $2\alpha + \pi\beta + \gamma = 0$, and in
that case $y = \gamma + \delta e$.  Since $\pi$ is irrational it is
linearly independent from~$1$ over the integers,
% ~$\ZZ$,
so $\beta = 0$ and hence $\gamma = -2\alpha$ is always an even
integer.  Since $e$ is irrational, the only integer points
% $\bigl[\twoline 0{-2\alpha + \delta e}\bigr] \in G \cap \rho^y$
in $G \cap \rho^y$ have even $y$-coordinate:
$$%
  G \cap \rho^y
  =
  \bigl\<\bigl[\twoline 02\bigr], \bigl[\twoline 0e\bigr]\bigr\>.
$$
The point $\bigl[\twoline 11\bigr] \in G$ has strictly positive
projection to~$\rho^y$, but that projection lands outside of~$G$.
% Therefore
Hence \raisebox{0pt}[11pt][0pt]{$\oG_+ = G \cap \orho^{\,x}_+ \times G
\cap \orho^{\,y}_+$} is a proper subgroup of~$G$, given the strictly
positive point \raisebox{0pt}[11pt][0pt]{$\bigl[\twoline 11\bigr] \in
G_+ \minus \oG_+$}.  Nonetheless, $\oG_+$
% (and therefore also $G_+$)
contains positive real multiples of~$\bigl[\twoline 11\bigr]$
approaching
% $\bigl[\twoline 00\bigr]$
the origin, which is all the colimit in the proof of
Theorem~\ref{t:G-flat-res}~requires.
\end{example}

\begin{thm}\label{t:G-flat-res}
If a subgroup $G \subseteq \RR^n$ is dense in every coordinate
subspace of\/~$\RR^n$ as in Definition~\ref{d:G}, then
Theorem~\ref{t:flat-res} holds verbatim with $R_G = \kk[G \cap
\RR^n_+]$ in place of~$R$.  Consequently, the ring~$R_G$ has flat
dimension~$n$ and global dimension~$n+1$.
\end{thm}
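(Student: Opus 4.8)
The plan is to re-run Sections~\ref{s:fldim} and~\ref{s:gldim} with $R_G$ in place of $R$, reading Definition~\ref{d:G} as supplying the $R_G$-analogues of Definitions~\ref{d:open-koszul} and~\ref{d:y}. Four things need re-proving: (i)~$\oK^\xx_\spot$ is a flat resolution of~$\kk$ over~$R_G$ whose $p$-th term is $\bigoplus_{|\sigma|=p} I_\sigma$; (ii)~$\oK^{\xx-\yy}_\spot(M)$ is a length-$n$ resolution of $M$ over $R_G^{\otimes 2}$ that is a direct sum of orthant $R_G$-ideals over~$R_G^\xx$ (the $R_G$-version of Theorem~\ref{t:flat-res}); (iii)~each orthant ideal $I_\sigma\subsetneq R_G$ has an $R_G$-free resolution of length at most~$1$ (the $R_G$-version of Lemma~\ref{l:orthant-res}); and (iv)~some $R_G$-module has projective dimension~$n+1$ (the $R_G$-version of Proposition~\ref{p:n+1}). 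Granting these, the conclusions follow as before: $\fldim R_G\geq n$ from the regular sequence $\xx^{[\ee]}$, $\ee\in\oG_+$, as in Example~\ref{e:usual-koszul}; $\fldim R_G\leq n$ from~(ii); $\gldim R_G\leq n+1$ by splicing the flat resolution in~(ii) against the length-$\leq 1$ free resolutions of its orthant-ideal summands from~(iii), exactly as in the proof of Theorem~\ref{t:gl.dim}; and $\gldim R_G\geq n+1$ from~(iv).

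Ingredients (i) and (iii) are the mild adaptations. Each $\xx^{[\ee]}=x_1^{\ve_1},\dots,x_n^{\ve_n}$ is a regular sequence in $R_G$ because coordinate monomials are independent, so $\KK_\spot(\xx^{[\ee]})$ resolves $R_G/\langle\xx^{[\ee]}\rangle$; exactness of filtered colimits makes $\oK^\xx_\spot$ acyclic in positive degrees, and its $H_0$ is $R_G$ modulo the ideal generated by all $x_i^{\ve_i}$ with $\ee\in\oG_+$. Here the coordinate-ray density enters: it makes $\{\ve:\ve e_i\in G\}$ accumulate at~$0$, so every nonunit monomial of $R_G$ is divisible by such a coordinate monomial and $H_0=R_G/\mm_G=\kk$; flatness is Lazard's criterion. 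Tracking the transition maps (diagonal multiplication by the $u_i=x_i^{\ve_i-\ve_i'}$) identifies the colimit of the $\sigma$-component with the orthant ideal $I_\sigma$, giving the asserted term structure. For~(iii) the proof of Lemma~\ref{l:orthant-res} works verbatim once one chooses a countable sequence $\ee_1\succ\ee_2\succ\cdots$ in $G_+$, supported on~$\sigma$, with $\sigma$-coordinates strictly decreasing to~$0$; such a sequence exists because $G$ is dense in the coordinate subspace indexed by~$\sigma$, that subspace containing the sum of the dense groups $G\cap\rho_i$, $i\in\sigma$. Ingredient~(ii) follows the proof of Theorem~\ref{t:flat-res}: $\oK^{\xx-\yy}_\spot(M)=\dirlim_{\ee\in\oG_+}\KK_\spot(\xx^{[\ee]}-\yy^{[\ee]})\otimes_{R_G^\yy}M^\yy$, each term is the ordinary Koszul complex of the sequence $\xx^{[\ee]}-\yy^{[\ee]}$ on $R_G^{\otimes 2}\otimes_{R_G^\yy}M^\yy$, that sequence is regular there because the $\xx$ variables are algebraically independent from the $\yy$ variables, so each term is acyclic with $H_0=M$; exactness of colimits concludes, and the summand description of Remark~\ref{r:orthant}, which concerns only $\oK^{\xx-\yy}_\spot$, is supplied by Definition~\ref{d:G}(3).

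The hard part is~(iv). The proof of Proposition~\ref{p:n+1} uses both the factorization $R=R_1\otimes_\kk R_{n-1}$ and the fact that the $R$-action on the coefficient module factors through the one-variable ring~$R_1$; neither is literally available, since $R_G$ need not split as a tensor product of its coordinate pieces (Example~\ref{e:dense}). My substitute is a coordinate retraction: the group $G_n=\{\ve\in\RR:\ve e_n\in G\}$ is dense in~$\RR$, the one-real-variable ring $R_{G,n}=\kk[(G_n)_+]$ sits in $R_G$ as $\kk[G_n e_n]$, and the $\kk$-linear map $R_G\onto R_{G,n}$ sending $x^g$ to $x^{g_n}$ when $g$ lies on the $n$-th coordinate ray and to~$0$ otherwise is a ring retraction — the only way a sum of two elements of $G_+$ can lie on the $n$-th axis is for both of them to. Over $R_{G,n}$ all of Sections~\ref{s:fldim}–\ref{s:gldim} apply with $n=1$; in particular Lemma~\ref{l:Ext(k,F)} gives $\Ext^2_{R_{G,n}}(\kk,F^{(1)})\neq 0$ for $F^{(1)}$ the length-$\leq 1$ free-resolution module of the maximal ideal of~$R_{G,n}$. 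One then checks that $R_G$ is flat over~$R_{G,n}$ (decompose $G_+$ into cosets of $G_n e_n$; each coset contributes a shifted copy of a flat open-ray module, so the direct sum is flat), picks $g_i\in G\cap\rho_i$ with $g_i>0$ for $i\leq n-1$ so that $x^{g_1},\dots,x^{g_{n-1}}$ is a regular sequence in~$R_G$, and forms $\Tot\!\big(\FF_\spot\otimes_{R_G}\KK_\spot\big)$ with $\FF_\spot=\FF^{(1)}_\spot\otimes_{R_{G,n}}R_G$ a length-$2$ $R_G$-free resolution of $R_G/\mm_{G,n}$ and $\KK_\spot=\KK_\spot^{R_G}(x^{g_1},\dots,x^{g_{n-1}})$. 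Since each $x^{g_i}$ maps to~$0$ under the retraction it annihilates~$F^{(1)}$, so the dualized Koszul differentials vanish after $\otimes_{R_G}F^{(1)}$ and the double-complex spectral sequence collapses, yielding $\Ext^{n+1}_{R_G}\!\big(R_G/(\mm_{G,n}+\langle x^{g_1},\dots,x^{g_{n-1}}\rangle),F^{(1)}\big)\supseteq\Ext^2_{R_{G,n}}(\kk,F^{(1)})\neq 0$ in the $(p,q)=(2,n-1)$ spot, just as in Proposition~\ref{p:n+1}; hence $\gldim R_G\geq n+1$. I expect the flatness of $R_G$ over $R_{G,n}$, and the verification in~(i)–(ii) that the colimit over the possibly proper submonoid~$\oG_+$ of $G_+$ (Example~\ref{e:dense}) still produces the right resolutions, to be exactly where the coordinate-ray density hypothesis has to be invoked carefully.
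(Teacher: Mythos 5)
Your proposal is correct, and for the flat‑dimension half together with the upper bound $\gldim R_G \le n+1$ (your items (i)--(iii)) it follows the paper's route exactly: replace the colimits over $\ve>0$ by colimits over $\ee\in\oG_+$, check the colimit is filtered (coordinatewise minima of elements of~$\oG_+$ stay in~$\oG_+$), identify the summands with the orthant ideals $I_\sigma$ of Definition~\ref{d:G}(3), use density of each $G\cap\rho$ for the cokernel calculations, and resolve each orthant ideal by a length‑$1$ free complex built from a countable chain $\ee_1\succ\ee_2\succ\cdots\to\0$ supported on~$\sigma$. Where you genuinely diverge is the lower bound, your item~(iv). The paper disposes of this in one sentence, asserting that Proposition~\ref{p:n+1} works mutatis mutandis once the generators of~$J$ are chosen in $G\cap\rho_i$. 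You correctly observe that the literal proof of Proposition~\ref{p:n+1} leans on the factorization $R\cong R_1\otimes_\kk R_{n-1}$, which is unavailable for~$R_G$ since $G_+$ need not be a product of its coordinate traces (Example~\ref{e:dense}), and you supply the mechanism needed to carry out the paper's claim: the coordinate retraction $R_G\onto R_{G,n}$, flatness of $R_G$ over~$R_{G,n}$ via the coset decomposition into shifted (possibly half‑open) ray modules, and the base‑changed resolution $\FF^{(1)}_\spot\otimes_{R_{G,n}}R_G$ tensored against the Koszul complex on $x^{g_1},\dots,x^{g_{n-1}}$; the adjunction and vanishing‑differential computation at the end then go through exactly as in Eq.~(\ref{eq:Hom}). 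This costs more work than the paper admits to but is the honest substitute for the missing tensor splitting. One detail you should make explicit: for $\Tot(\FF_\spot\otimes_{R_G}\KK_\spot)$ to resolve $R_G/I$ you need $x^{g_1},\dots,x^{g_{n-1}}$ to be regular not merely on~$R_G$ but on $R_G/\mm_{G,n}R_G$ (equivalently, the higher Tor's between the two cyclic quotients vanish). This holds by the same coordinate‑support argument that makes your retraction multiplicative --- if $g+g_i-te_n\in G_+$ with $t\in (G_n)_+$ then already $g-te_n\in G_+$, since $g_i$ and $te_n$ live on different axes --- but it is the one load‑bearing step of your construction that the write‑up leaves unsaid.
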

\begin{proof}
For $\sigma \subseteq \{1,\dots,n\}$ and $\ee \in \RR^n$ let
$\ee_\sigma \in \RR^n$ be the restriction of $\ee$ to~$\sigma$, so
$\ee_\sigma$ has entry~$0$ in the coordinate indexed by every $j
\not\in \sigma$.  The $2^n$ summands of $\oK_\spot^\xx$ are orthant
ideals because $\KK_i(\xx^{[\ee]}) \cong \bigoplus_{|\sigma|=i}
\<\xx^{\ee_\sigma}\>$ naturally with respect to the inclusions induced
by the colimit defining~$\oK_\spot^\xx$.  Each orthant ideal is flat
because this colimit is filtered: given two vectors $\ee_1, \ee_2 \in
\oG_+$, the coordinatewise minimum $\ee_1 \wedge \ee_2 \in \RR^n_+$
lies in~$\oG_+$ because its projection to each ray lies in~$G$.
Proposition~\ref{p:fldim(k)} therefore generalizes to~$R_G$ by the
exactness of colimits and the cokernel calculation $\kk = R_G/\mm$ for
the \mbox{$G$-graded} maximal ideal $\mm = \<\xx^\ee \mid \0 \neq \ee
\in G_+\>$.  Example~\ref{e:usual-koszul} generalizes with no
additional work.  Lemma~\ref{l:Rotimes2} generalizes by exactness of
colimits and the cokernel calculation $R_G \cong R_G^{\otimes
2}/\<\xx^{[\ee]} - \yy^{[\ee]} \mid \0 \neq \ee \in G_+\>$.  The
conclusion of Remark~\ref{r:orthant} generalizes, but the reason is
direct calculation of $\oK_\spot(\xx^{[\ee]}-\yy^{[\ee]})$ as was done
for~$\oK_i^\xx$.  The original proof of Theorem~\ref{t:flat-res} uses
that tensor products commute with colimits, but the generalized proof
avoids that argument by simply defining $\oK_\spot^{\xx-\yy}$ as the
relevant colimit.  The rest of the proof and the generalization of the
flat dimension claim in Corollary~\ref{c:fldim} work mutatis mutandis,
given the strengthened versions of the results~they~cite.

The orthant ideal resolution in Lemma~\ref{l:orthant-res} generalizes
to~$R_G$ by the density hypothesis, including specifically the part
about intersecting with coordinate subspaces.  The Ext calculation in
Lemma~\ref{l:Ext(k,F)} works again by density of the exponent set
of~$\mm_1$ in~$\RR_+$.  The statement and proof of
Proposition~\ref{p:n+1} work mutatis mutandis for $R_G$ in place
of~$R$ as long as the power of~$x_i$ generating~$J$ lies in the
intersection of~$G$ with the corresponding coordinate ray of~$\RR^n$.
The proof of Theorem~\ref{t:gl.dim} then works verbatim, given the
strengthened versions of the results it cites.
\end{proof}

%raggedbottom%%%%%%%%%%%%%%%%%%%%%%%%%%%%%%%%%%%%%%%%%%%%%%%%%%%%%%%%%%%%
%%%%%%%%%%%%%%%%%%%%%%%%%%%%%%%%%%%%%%%%%%%%%%%%%%%%
%%%%%%%%%%%%%%%%%%%%%%%%%%%%%%%%%%%%%%%%%%%%%%%%%%%%%%%%%%%%%%%%%%%%%%%%%

%addtocontents{toc}{\vspace{-1ex}}%%%%%%%%%%%%%%%%%%%%

%%%%%%%%%%%%%%%%%%%%%%%%%%%%%%%%%%%%%%%%%%%%%%%%%%%%%%%%%%%%%%%%%%%%%%%%%

\begin{thebibliography}{KLMV20}%%%%%%%%%%%%%%%%%%%%%%%%%%%%%%%%%%%%%%%%%%
%%%%%%%%%%%%%%%%%%%%%%%%%%%%%%%%%%%%%%%%%%%%%%%%%%%%%%%%%%%%%%%%%%%%%%%%%

\bibitem[AH97]{aberbach-hochster1997}
Ian Aberbach and Melvin Hochster, \emph{Finite Tor dimension and
  failure of coherence in absolute integral closures}, J.\ Pure Appl.\
  Algebra \textbf{122} (1997), no.\,3, 171--184.

\bibitem[AB57]{auslander-buchsbaum1957}
Maurice Auslander and David A.\ Buchsbaum, \emph{Homological dimension
  in local rings}, Trans.\,Amer.\,Math.\,Soc.\,\textbf{85} (1957),
  390--405.
% MR0086822 (19,249d)

\bibitem[Bas60]{bass1960}
Hyman Bass, \emph{Finitistic dimension and a homological
  generalization of semi-primary rings},
  Trans.\,Amer.\,Math.\,Soc.\,\textbf{95} (1960), 466--488.
% MR0157984 (28 #1212)

\bibitem[BP21]{berkouk-petit2021}
Nicolas Berkouk and Fran\c{c}ois Petit, \emph{Ephemeral persistence
  modules and distance comparison}, Algebraic and Geometric Topology
  \textbf{21} (2021) 247--277.\ \ doi:10.2140/agt.2021.21.247\ \
  \textsf{arXiv:math.AT/1902.09933v1}

\bibitem[BH98]{bruns-herzog1998}
Winfried Bruns and J\"urgen Herzog, \emph{Cohen--Macaulay rings},
  Cambridge Studies in Advanced Mathematics Vol.\,39,
  Cambridge University Press, Cambridge, 1993.
% xii+403 pp.
% ISBN: 0-521-41068-1 
% MR1251956 (95h:13020)

\bibitem[BM20]{bubenik-milicevic2020}
Peter Bubenik and Nikola Milicevic, \emph{Homological algebra for
  persistence modules}, Foundations of Computational Mathematics, to
  appear, 2020.  \textsf{arXiv:math.AT/1905.05744}

\bibitem[EMO20]{sylvan-resolution}
John Eagon, Ezra Miller, and Erika Ordog, \emph{Minimal resolutions of
  monomial ideals}, 30~pages, submitted.
  \textsf{arXiv:math.AC/1906.08837v2}
% https://arxiv.org/abs/1906.08837

% \bibitem[HS97]{hilton-stammbach1997}
% Peter Hilton and Urs Stammbach, \emph{A course in homological
%   algebra}, second edition, Graduate Texts in Mathematics, Vol.\,4,
%   Springer-Verlag, New York, 1997.
% % xii+364 pp.
% % ISBN: 0-387-94823-6 
% % MR1438546 (97k:18001)

\bibitem[KS18]{kashiwara-schapira2018}
Masaki Kashiwara and Pierre Schapira, \emph{Persistent homology and
  microlocal sheaf theory}, J. of Appl. and Comput.\ Topology
  \textbf{2}, no.\,1--2 (2018), 83--113.\ \
  \textsf{arXiv:math.AT/1705.00955v6}

\bibitem[KLMV20]{katzarkov-lupercio-meersseman-verjovsky2020}
Ludmil Katzarkov, Ernesto Lupercio, Laurent Meersseman, and Alberto
  Verjovsky, \emph{Quantum (non-commutative) toric geometry:
  foundations}, preprint, 2020.  \textsf{arXiv:math.SG/ 2002.03876}

\bibitem[KMMR21]{functorial-endpoints}
Woojin Kim, Ezra Miller, Samantha Moore, and Margaret Regan,
  \emph{Functorial endpoints for multiparameter persistence}, draft,
  2021.

\bibitem[Laz64]{lazard1964}
Daniel Lazard, \emph{Sur les modules plats},
% (French),
  C.\,R.\,Acad.\,Sci.\,Paris \textbf{258} (1964), 6313--6316.

\bibitem[Les15]{lesnick-interleav2015}
Michael Lesnick, \emph{The theory of the interleaving distance on
  multidimensional persistence modules}, Foundations of Computational
  Mathematics (2015), no.~15, 613--650.\ \
% https://doi.org/10.1007/s10208-015-9255-y
  doi:10.1007/s10208-015-9255-y\ \ \textsf{arXiv:math.CG/1106.5305}

% \bibitem[MaL98]{macLane1998}
% Saunders Mac Lane, \emph{Categories for the working mathematician},
%   second edition, Graduate Texts in Mathematics Vol.\,5,
%   Springer-Verlag, New York, 1998.
% % xii+314 pp.
% % ISBN: 0-387-98403-8  
% % MR1712872 (2001j:18001)

\bibitem[Mil20a]{hom-alg-poset-mods}
Ezra Miller, \emph{Homological algebra of modules over posets},
  submitted, 2020.  \textsf{arXiv:math.AT/ 2008.00063}

\bibitem[Mil20b]{strat-conical}
Ezra Miller, \emph{Stratifications of real vector spaces from
  constructible sheaves with conical microsupport}, submitted, 2020.
  \textsf{arXiv:math.AT/2008.00091}

\bibitem[Mil20c]{essential-real}
Ezra Miller, \emph{Essential graded algebra over polynomial rings with
  real exponents}, submitted, 2020.  \textsf{arXiv:math.AC/2008.03819}

\bibitem[Oso67]{osofsky1967}
Barbara L. Osofsky, \emph{Global dimension of valuation rings},
  Trans.\,Amer.\,Math.\,Soc.\,\textbf{127} (1967), 136--149.
% MR0206074 (34 #5899)

\bibitem[Oso74]{osofsky1974}
Barbara L. Osofsky, \emph{The subscript of\/ $\aleph_n$, projective
  dimension, and the vanishing of\/~$\invlim^{(n)}$}\mkl{,}
  Bull.\,Amer.\,Math.\,Soc.\,\textbf{80} (1974), 8--26.
% MR0330226 (48 #8564)

\bibitem[Oud15]{oudot2015}
Steve Oudot, \emph{Persistence theory: from quiver representations to
  data analysis}, Mathematical Surveys and Monographs Vol.\,209,
  Amer.\ Math.\ Society, Providence, RI, 2015.
% viii+218pp
% ISBN: 978-1-4704-2545-6
% MR3408277
% Steve Y. Oudot

\bibitem[Ser56]{serre1956}
Jean-Pierre Serre, \emph{Sur la dimension homologique des anneaux et
  des modules noeth\'eriens},
% (French),
  Proceedings of the international symposium on algebraic number
  theory, Tokyo \& Nikko, 1955, Science Council of Japan, Tokyo, 1956,
  pp.\,175--189.
% MR0086071 (19,119a)

\bibitem[Wei94]{weibel1994}
Charles Weibel, \emph{An introduction to homological algebra},
  Cambridge Studies in Advanced Mathematics Vol.\,38, Cambridge
  University Press, New York, 1994.
% xiv+450 pp.
% ISBN: 0-521-43500-5; 0-521-55987-1
% MR1269324 (95f:18001)
% Charles A. Weibel


\vspace{-.5ex}%vfill%%%%%%%%%%%%%%%%%%%%%%%%%%%%%%%%%%%%%%%%%%%%%%%%%%%%%
\end{thebibliography}
\end{document}